\newtheorem{theorem}{Theorem}[section]
\newtheorem{proposition}[theorem]{Proposition}
\newtheorem{lemma}[theorem]{Lemma}
\newtheorem{definition}[theorem]{Definition}
\newtheorem{remark}[theorem]{Remark}
\DeclareMathOperator{\chr}{char}
\DeclareMathOperator{\im}{im}
\DeclareMathOperator{\Jac}{Jac}
\DeclareMathOperator{\Sym}{Sym}
\newcommand{\ignore}[1]{}
\newcommand{\frakp}{\mathfrak{p}}
\newcommand{\tr}[1]{\tau_{#1}}
\newcommand{\oo}{\mathcal{O}}
\newcommand{\FF}{\mathbb{F}}
\newcommand{\PP}{\mathbb{P}}
\newcommand{\conj}[1]{\overline{#1}}
\newcommand{\card}[1]{|#1|} 
\newcommand{\isom}{\cong} 
\newcommand{\system}{set}
\title[]{Complete addition laws on abelian varieties}
\author{Christophe Arene, David Kohel and Christophe Ritzenthaler}
\begin{document}

\maketitle

\begin{abstract}
We prove that under any projective embedding of an abelian variety $A$ of 
dimension $g$, a complete set of addition laws has cardinality at least 
$g+1$, generalizing of a result of Bosma and Lenstra 
for the Weierstrass model of an elliptic curve in $\PP^2$.  
In contrast,  we moreover prove that if $k$ is any field with infinite absolute Galois 
group, then there exists, for every abelian variety $A/k$, a projective 
embedding and an addition law defined for every pair of $k$-rational points.  
For an abelian variety of dimension 1 or 2, we show that this embedding can be 
the classical Weierstrass model or the embedding in $\PP^{15}$, respectively, 
up to a finite number of counterexamples for $|k| \le 5$.
\end{abstract}

\section{Introduction}

The notion of completeness of a \system\ of addition laws for an abelian variety 
$A$ in $\PP^r$ was introduced by Lange and Ruppert~\cite{LaRu}. We recall that 
an addition law is an $(r+1)$-tuple of bihomogeneous polynomials $(p_0,\dots,p_r)$ 
such that the map
$$
(x,y) \longmapsto (p_0(x,y),\dots,p_r(x,y)),
$$
determines the group law $\mu: A \times A \rightarrow A$ on an open subset of 
$A \times A$, and a set of addition laws is complete if these open sets cover 
$A \times A$ (see Definition~\ref{def:Complete}).
The bidegree $(m,n)$ of an addition law is the bidegree of the polynomials $p_i$ in 
$x$ and $y$. Lange and Ruppert prove that the minimal bidegree of any addition 
law is $(2,2)$ and determine exact dimensions for the spaces of all addition 
laws of given bidegree.  For an elliptic curve $E$ in $\PP^2$ in Weierstrass 
form, the space of addition laws has dimension $3$, and Bosma and 
Lenstra~\cite{BosmaLenstra} proved that two  
suffice for a complete \system, determining $\mu$ on all of $E \times E$. 

In 2007, Edwards introduced a new normal form for elliptic curves
$$
x_1^2 + x_2^2 = a^2(1 + x_1^2 x_2^2),
$$
with particularly simple rational expression for the group law.
After a coordinate scaling, Bernstein and Lange~\cite{BLfasteraddition} descend 
this model to 
$$
x_1^2 + x_2^2 = 1 + d x_1^2 x_2^2,
$$
for $d = a^4$, which admits the group law $x + y = z$  
where 
$$
z = \left(
  \frac{x_1 y_2 + x_2 y_1}{1 + d x_3 y_3},
  \frac{y_3 - x_3}{1 - d x_3 y_3}\right), 
$$
for $x_3 = x_1 x_2$ and $y_3 = y_1 y_2$. 
In addition to giving a precise analysis of the efficiency of this group law, 
Bernstein and Lange observe that the addition law is $k$-complete over any 
field $k$ in which $d$ is a nonsquare (i.e.~the addition law is well-defined 
on all pairs of $k$-rational points of $E$). 
To interpret these rational expressions in terms of projective addition 
laws as analyzed by Lange and Ruppert, we note that $\{1,x_1,x_2,x_3\}$ 
forms a basis of global sections for the Riemann--Roch space of the divisor 
at infinity for the pair of coordinate functions $(x_1,x_2)$, and that this 
basis determines a projective embedding 
$$
(x_1,x_2,x_3) \longmapsto (1:x_1:x_2:x_3)
$$
in $\PP^3$ which is projectively normal (see Section 2 for precise definitions).  
Namely the image curve is of the form
$$
X_1^2 + X_2^2 = X_0^2 + dX_3^2,\ X_0 X_3 = X_1 X_2.
$$
The Edwards addition law can be interpreted as the  
bidegree $(2,2)$ addition law
$$
\begin{array}{r@{\,}r@{\;}l}
\big( 
  & (X_0 Y_0 + d X_3 Y_3) (X_0 Y_0 - d X_3 Y_3),
  & (X_0 Y_0 - d X_3 Y_3) (X_1 Y_2 + X_2 Y_1), \\
  & (X_0 Y_0 + d X_3 Y_3) (X_0 Y_3 - X_3 Y_0),
  & (X_0 Y_3 - X_3 Y_0) (X_1 Y_2 + X_2 Y_1) \,\big).
\end{array}
$$
Any elliptic curve specified by an affine model has a canonical embedding associated 
to the complete linear system. Consequently, we refer only to such abelian varieties 
with projective embeddings.  

In terms of degree 3 models, Bernstein, Kohel and Lange~\cite{BKLHessian} 
construct a $k$-complete addition law on the family of twisted Hessian 
curves
$$
aX_0^3 + X_1^3 + X_2^3 = d X_0 X_1 X_2,
$$
which admit the $k$-complete addition laws
$$
(
  X_0 X_1 Y_1^2 - X_2^2 Y_0 Y_2,\ 
  a X_0 X_2 Y_0^2 - X_1^2 Y_1 Y_2,\  
  -a X_0^2 Y_0 Y_1 + X_1 X_2 Y_2^2
),
$$
and 
$$
(
  X_0 X_2 Y_2^2 -X_1^2 Y_0 Y_1,\
  -a X_0^2 Y_0 Y_2 + X_1 X_2 Y_1^2,\ 
  a X_0 X_1 Y_0^2 - X_2^2 Y_1 Y_2
),
$$
over any field $k$ in which $a$ is not a cube.  Any such model is equivalent 
to a Weierstrass model by a linear change of variables, which shows that the 
property of $k$-completeness is not special to quartic models in $\PP^3$. 

Both the Edwards and twisted Hessian models share the property that they 
require a level structure of rational torsion.  In analogy with the quartic 
Edwards model, Bernstein and Lange~\cite{BL-UglyComplete} demonstrate by 
example that a general elliptic curve admits a quartic model with $k$-complete 
addition law (subject to some coefficient being a nonsquare), while resorting 
to a rational expression for an addition law of high bidegree.  
The second author of the present article gives an elementary characterization 
of $k$-completeness of addition laws of bidegree $(2,2)$ in terms of the Galois 
action on an associated divisor on the curve~\cite[Corollary~12]{Kohel-AdditionLaws}.  
In particular, the property of $k$-completeness on elliptic curves is not special.

In this paper, we generalize the above results to abelian varieties. We determine 
new, tight bounds on the size of a complete \system\ of addition laws under any 
embedding, a generalization of the result of Bosma and Lenstra~\cite{BosmaLenstra} 
for elliptic curves.  Moreover we prove that if $k$ is any field with infinite 
absolute Galois group, then there exists, for every abelian variety $A/k$, a 
projective embedding and an addition law defined for every pair of $k$-rational 
points (see Theorem~\ref{thm:AbVar}). 

Our work builds on the elegant paper of Lange and Ruppert~\cite{LaRu}, 
in which the authors interpret addition laws on an abelian variety $A/k$ 
in terms of sections of a certain line bundle $\mathcal{M}$ on $A\times A$. 
Our key idea is to observe that an addition law associated to a section $s$ 
of $H^0(A\times A,\mathcal{M})$ with zero divisor $D_s := (s)_0$ is defined on 
$A\times A\setminus D_s$. We obtain a $k$-complete addition law by constructing 
a $k$-rational divisor $D_s$ without any $k$-rational point. 
This gives an exact analog of the elliptic curve case studied by the second 
author~\cite{Kohel-AdditionLaws}.

In Section 2, we recall some definitions and concepts of \cite{LaRu}, explain 
more explicitly the link between addition laws on a projective embedding of $A/k$ 
and sections of $H^0(A\times A,\mathcal{M})$, and also deal with the geometric 
case $k=\bar{k}$.  For any principally polarized abelian variety of dimension $g$, 
we give bounds on the cardinality of any complete \system\ of addition laws. 
In particular we show that its cardinality is at least $g+1$.

In Section 3, we consider the case of a field $k$ with infinite absolute 
Galois group, and prove the aforementioned result on existence of a pair 
consisting of a projective embedding and a $k$-complete addition law. 

In Section 4, we specialize to elliptic curves and Jacobians of genus 2 curves over 
a finite field $k$, noting that the results also extend to other fields (see 
Remarks \ref{rem:HilbertianFields} and \ref{rem:NumberFields}). We prove that there 
exists a $k$-complete addition law for their classical embeddings in $\PP^2$ and 
$\PP^{15}$, respectively, as soon as $\card{k} \geq 5$ for elliptic curves and 
$\card{k} \geq 7$ for Jacobian surfaces. In particular, we exhibit an explicit 
$k$-complete addition law on a Weierstrass model of an elliptic curve $E$ over 
$k$ when $E$ has no nontrivial rational $2$-torsion point.

\section{Addition laws and completeness}
\label{sec:addition}

Let $k$ be a field and $A/k$ be an abelian variety of dimension $g$. We assume that $A$ is embedded in some projective
space $\PP^r$ over $k$, by a very ample line bundle $\mathcal{L}=\mathcal{L}(D)$ for $D$ an effective divisor, and we
denote by $\iota : A \hookrightarrow \PP^r$ the corresponding morphism. We also assume in the sequel that the embedding
is projectively normal. Recall that $A$ is said to be \emph{projectively normal} in $\PP^r$ if for every $n\ge1$ the restriction map $\Gamma(\PP^r,\mathcal{O}_{\PP^r}(n))\rightarrow\Gamma(A,\mathcal{L}^n)$ is surjective. This is the case in the classical settings where $\mathcal{L}=\mathcal{L}_0^a$ with $\mathcal{L}_0$ an ample line bundle and $a\ge 3$ \cite[p.187]{bilange}.

Let $I_1$ and $I_2$ be the homogeneous defining ideal for $A$ in $k[X_0,\ldots,X_r]$ and 
$k[Y_0,\ldots,Y_r]$, respectively. The \emph{group law} 
$$
\mu : A\times A\rightarrow A,
$$
defined by $(x,y) \mapsto x+y$, can be locally described by bihomogenous polynomials. 
\ignore{
More precisely, an \emph{addition law} $\frakp$ of bidegree $(m,n)$ on $\iota(A) \subset \PP^r$ 
is a nonempty open set $U\subset A \times A$ together with $r+1$ elements 
$$
p_0,\ldots,p_r \in k[X_0,\ldots,X_r]/I_1\otimes k[Y_0,\ldots, Y_r]/I_2,
$$ 
bihomogeneous of degree $m$ in $X_0,\ldots,X_r$ and of degree $n$ in $Y_0,\ldots,Y_r$ 
such that we have 
$$
\iota\circ\mu\,(x,y) = \Big(p_0\big(\iota(x),\iota(y)\big) : \ldots : p_r\big(\iota(x),\iota(y)\big)\Big)
$$
for all $(x,y) \in U(\bar{k})$.
}
More precisely, an \emph{addition law} $\frakp$ of bidegree $(m,n)$ on $\iota(A) \subset \PP^r$ 
is an $(r+1)$-tuple $(p_0,\ldots,p_r)$ of elements 
$$
p_i \in k[X_0,\ldots,X_r]/I_1 \otimes k[Y_0,\ldots, Y_r]/I_2,
$$ 
which are bihomogeneous of degree $m$ and $n$ in $X_0,\ldots,X_r$ and $Y_0,\ldots,Y_r$, 
respectively, and for which there exists a nonempty open subset $U$ of $A \times A$ 
such that, for all $(x,y) \in U(\bar{k})$,
$$
\iota\circ\mu\,(x,y) = \big(p_0(\iota(x),\iota(y)) : \ldots : p_r(\iota(x),\iota(y))\big).
$$
When $A$ is given with a fixed embedding in $\PP^r$ we may suppress the 
reference to the embedding $\iota$ and speak of addition laws on $A$. 

\begin{definition}
\label{def:Complete}
 A set $S$ of addition laws is said to be \emph{$k$-complete} if for any $k$-rational point $(x,y) \in (A \times A)(k)$ there is an
 addition law in $S$ defined on an open set $U$ containing $(x,y)$. This set is said to be \emph{complete} if the
 previous property is true over $\bar{k}$. If $S=\{\frakp\}$ is a singleton, we say the addition law $\frakp$ is
 \emph{$k$-complete} and \emph{complete} when $k=\bar{k}$. 
 \end{definition}
 
In \cite[Lem.2.1]{LaRu}, Lange and Ruppert give the interpretation of the possible addition laws in terms of the sections of  certain line bundles.

\begin{proposition}
\label{prop:addition-law-sheaf}
Let $\pi_1,\pi_2 : A \times A \to A$ be the projection maps on the first and second factor. 
There is an addition law (respectively a  complete set of addition laws) of bidegree $(m,n)$ 
on $A$ with respect to the embedding in $\PP^r$ determined by $\mathcal{L}$ if and only if 
$$
H^0(A \times A, \mathcal{M}_{m,n}) \neq 0
$$ 
(respectively the linear system $\mid\mathcal{M}_{m,n}\mid$ is basepoint-free), 
where $$
\mathcal{M}_{m,n}=\mu^*\mathcal{L}^{-1} \otimes \pi_1^*\mathcal{L}^{m} \otimes \pi_2^*\mathcal{L}^{n}.
$$
\end{proposition}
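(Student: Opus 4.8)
The plan is to translate the geometric content of an addition law into the language of sections of line bundles on $A \times A$, following the setup already established. First I would fix coordinates: an addition law $(p_0, \dots, p_r)$ of bidegree $(m,n)$ assigns to the $i$-th coordinate function $X_i$ on $\PP^r$ a bihomogeneous polynomial $p_i$ that, on the open set where the law is defined, represents the pullback $\mu^* X_i$. The key observation is that $\mu^*\mathcal{L}$ is the line bundle whose sections are spanned (via the projective normality hypothesis) by $\mu^* X_0, \dots, \mu^* X_r$, while $\pi_1^*\mathcal{L}^m \otimes \pi_2^*\mathcal{L}^n$ is the line bundle whose sections include all bihomogeneous polynomials of bidegree $(m,n)$ in the $X$'s and $Y$'s. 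Twisting, a tuple $(p_0, \dots, p_r)$ of bidegree $(m,n)$ polynomials that computes $\mu$ on an open set is exactly the data of a nonzero morphism $\mu^*\mathcal{L} \to \pi_1^*\mathcal{L}^m \otimes \pi_2^*\mathcal{L}^n$ of line bundles on $A \times A$, equivalently a nonzero global section of $\mathcal{M}_{m,n} = \mu^*\mathcal{L}^{-1} \otimes \pi_1^*\mathcal{L}^m \otimes \pi_2^*\mathcal{L}^n$.

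The two directions then go as follows. For the forward direction, given an addition law, form the section $s$ of $\mathcal{M}_{m,n}$ just described; the defining property (that on a dense open set the $p_i$ have no common zero and represent $\mu^* X_i$) forces $s \neq 0$. Conversely, given a nonzero $s \in H^0(A \times A, \mathcal{M}_{m,n})$, untwist it to a morphism $\mu^*\mathcal{L} \to \pi_1^*\mathcal{L}^m \otimes \pi_2^*\mathcal{L}^n$; composing with the sections $X_0, \dots, X_r$ and using projective normality to express the images as bihomogeneous polynomials $p_i$ of bidegree $(m,n)$, one recovers a tuple that computes $\mu$ on the complement of the zero locus of $s$ — which is a nonempty open set precisely because $s \neq 0$. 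This establishes the first equivalence.

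For the statement about complete sets of addition laws: a set $S = \{\frakp^{(1)}, \dots, \frakp^{(N)}\}$ of addition laws of bidegree $(m,n)$ corresponds to sections $s_1, \dots, s_N$ of $\mathcal{M}_{m,n}$, and $\frakp^{(j)}$ is defined on $A \times A \setminus (s_j)_0$. The set $S$ is complete exactly when these open sets cover $A \times A$ over $\bar{k}$, i.e.\ when $\bigcap_j (s_j)_0 = \emptyset$, which is precisely the condition that $s_1, \dots, s_N$ have no common zero, i.e.\ that they span a basepoint-free sub-linear-system. Since $H^0(A \times A, \mathcal{M}_{m,n})$ is finite-dimensional, the full linear system $|\mathcal{M}_{m,n}|$ is basepoint-free if and only if some finite subset of sections has no common base point, giving the claimed equivalence for the existence of a complete set.

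The main obstacle is bookkeeping rather than deep input: one must check carefully that the bijection between bidegree-$(m,n)$ polynomial tuples and morphisms $\mu^*\mathcal{L} \to \pi_1^*\mathcal{L}^m \otimes \pi_2^*\mathcal{L}^n$ is well-defined and does not depend on the choice of polynomial representatives modulo the ideals $I_1, I_2$, and that it is here that projective normality is genuinely used — it guarantees that every global section of $\pi_1^*\mathcal{L}^m \otimes \pi_2^*\mathcal{L}^n$ arises from an honest bihomogeneous polynomial, and likewise that $\mu^*\mathcal{L}$ is generated by $\mu^* X_0, \dots, \mu^* X_r$, so that a morphism out of it is determined by its effect on these sections. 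Once this identification is in place, the rest is the elementary translation between "covering by open complements of zero divisors'' and "basepoint-freeness'' described above; I refer to \cite[Lem.~2.1]{LaRu} for the original argument.
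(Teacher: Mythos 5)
Your proposal is correct and follows essentially the same route as the paper, which likewise identifies an addition law with a nonzero section of $\mathcal{M}_{m,n}$, uses projective normality to realize $w\otimes\mu^*t_j$ as bihomogeneous polynomials of bidegree $(m,n)$ defined off the zero locus of the section, and defers the remaining details (in particular the forward direction and the basepoint-free criterion) to Lange--Ruppert \cite[Lem.~2.1]{LaRu}. The only point treated more lightly than in \emph{loc.~cit.} is the verification, in the forward direction, that the rational section attached to a given tuple $(p_0,\dots,p_r)$ is everywhere regular (which uses that the $\mu^*t_i$ have no common zero), but since you cite the same reference the paper does, this matches the paper's level of detail.
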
 

We explain how one associates an addition law to a nonzero section $w$ in $H^0(A \times A,\mathcal{M}_{m,n})$. 
For $0 \leq j \leq n$, let $t_j \in H^0(A,\mathcal{L})$ be the basis given by $t_j=\iota^* X_j$ where $X_j$ are 
the coordinate functions on $\PP^r$. As shown in \cite[p.607]{LaRu}, $H^0(A \times A,\mu^* \mathcal{L}) = 
\mu^*H^0(A,\mathcal{L})$, so $s_j= \mu^* t_j$ is a basis of $H^0(A \times A,\mu^* \mathcal{L})$. For each $j$ 
and $(x,y) \in A \times A$, we have 
$$
s_j(x,y)=t_j \circ \mu\,(x,y)=X_j (\iota\circ\mu\,(x,y)).
$$
Now $w \otimes s_j \in H^0(A \times A,\pi_1^* \mathcal{L}^{m} \otimes \pi_2^* \mathcal{L}^{n})$. 
As the embedding is projectively normal we have 
$$
\pi_1^* \mathcal{L}^{m} \otimes \pi_2^*\mathcal{L}^{n}=(\iota\otimes \iota)^*\oo_{\PP^r}(m) \otimes \oo_{\PP^r}(n),
$$ 
then there exists a bihomogeneous polynomial $p_j$ of bidegree $(m,n)$ such that for all points $(x,y) \in A \times A$
$$
(w \otimes s_j)(x,y)=p_j(\iota(x),\iota(y)).
$$
Therefore, if $U=A \times A \setminus (w)_0$, we have
$$
\begin{array}{r@{\ }c@{\ }l}
(p_0(\iota(x),\iota(y)) : \ldots : p_r(\iota(x),\iota(y))) & = & ((w \otimes s_0)(x,y) : \ldots : (w \otimes s_r)(x,y)) \\
&=& (s_0(x,y) : \ldots : s_r(x,y)) \\
&=& (X_0(\iota\circ\mu(x,y)) : \ldots : X_r(\iota\circ\mu(x,y)))\\
&=& \iota(\mu(x,y)).
\end{array}
$$
Another natural requirement to ask is that $\mathcal{L} = \mathcal{L}(D)$ 
 be symmetric, i.e.~$[-1]^*\mathcal{L} \isom \mathcal{L}$, or 
equivalently $D \sim [-1]^*D$, as we can see in the following lemmas.
\begin{lemma}
\label{lem:Inversion}
If $A/k$ is embedded in $\PP^r$ by a very ample symmetric line bundle $\mathcal{L}$ (projectively normal), then the inversion map $[-1]$ on $A$ is induced by a linear automorphism of $\PP^r$. Moreover if $\chr(k)\not=2$ there is a choice of coordinates such that the inversion acts by $\pm1$ on each coordinate.
\end{lemma}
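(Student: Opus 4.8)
The plan is to use the rigidity of an abelian variety together with the fact that a symmetric line bundle translates the inversion morphism into something that respects the linear structure of the projective embedding. First I would observe that $[-1]\colon A\to A$ is an automorphism fixing the identity $0_A$, and that $[-1]^*\mathcal{L}\isom\mathcal{L}$ by hypothesis. Since the embedding $\iota\colon A\hookrightarrow\PP^r$ is the one attached to the complete linear system $|D|=|\mathcal{L}|$ (projective normality gives $\Gamma(\PP^r,\oo_{\PP^r}(1))\xrightarrow{\sim}\Gamma(A,\mathcal{L})$, so $\PP^r=\PP(H^0(A,\mathcal{L})^\vee)$), the isomorphism $[-1]^*\mathcal{L}\isom\mathcal{L}$ induces a linear isomorphism on $H^0(A,\mathcal{L})$ — that is, an element of $\mathrm{PGL}_{r+1}$ — and by the functoriality of the embedding-by-a-complete-linear-system this linear automorphism $\varphi$ of $\PP^r$ satisfies $\varphi\circ\iota=\iota\circ[-1]$. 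This is exactly the first assertion.

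Next, for the refinement in characteristic $\neq 2$, I would diagonalize $\varphi$. The key point is that $[-1]$ is an involution: $[-1]\circ[-1]=\mathrm{id}_A$, hence $\varphi^2$ fixes $\iota(A)$ pointwise, and since $\iota(A)$ spans $\PP^r$ (non-degeneracy of the embedding by a complete linear system), $\varphi^2=\mathrm{id}$ in $\mathrm{PGL}_{r+1}$. Lift $\varphi$ to $\tilde\varphi\in\mathrm{GL}_{r+1}$; then $\tilde\varphi^2=\lambda\,\mathrm{Id}$ for a scalar $\lambda$, and after rescaling $\tilde\varphi$ we may assume $\tilde\varphi^2=\mathrm{Id}$. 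Because $\chr(k)\neq 2$, the minimal polynomial of $\tilde\varphi$ divides $T^2-1=(T-1)(T+1)$, which has distinct roots, so $\tilde\varphi$ is diagonalizable over $k$ with eigenvalues in $\{+1,-1\}$. Choosing coordinates adapted to this eigenspace decomposition, $[-1]$ acts by $X_i\mapsto\pm X_i$ on each coordinate, as claimed.

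The only point requiring care — the step I expect to be the main obstacle to state cleanly rather than technically hard — is the passage from "$[-1]^*\mathcal{L}\isom\mathcal{L}$" to "$[-1]$ is induced by an element of $\mathrm{PGL}_{r+1}$ compatibly with $\iota$": one must be precise that the embedding is canonically $A\hookrightarrow\PP(H^0(A,\mathcal{L})^\vee)$, so that any automorphism $\sigma$ of $A$ with $\sigma^*\mathcal{L}\isom\mathcal{L}$ acts on $H^0(A,\mathcal{L})$ and the induced projective-linear map intertwines $\iota$ with $\sigma$; projective normality is what guarantees the chosen coordinates $X_0,\dots,X_r$ really are a basis of $H^0(A,\mathcal{L})$ and not merely generate a subsystem, so no subtlety is lost. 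Once that identification is in place, both the existence of the linear automorphism and its diagonalizability in $\chr(k)\neq 2$ follow formally from the involutivity of $[-1]$ as above.
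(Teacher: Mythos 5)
Your first assertion is handled the same way the paper does (the paper simply calls it a direct consequence of the symmetry of $\mathcal{L}$, with the complete linear system $\PP\big(H^0(A,\mathcal{L})^\vee\big)$ in the background), so that part is fine. The problem is in the diagonalization step. You lift the projective involution to $\tilde\varphi\in\mathrm{GL}_{r+1}(k)$ with $\tilde\varphi^2=\lambda\,\mathrm{Id}$ and then say ``after rescaling $\tilde\varphi$ we may assume $\tilde\varphi^2=\mathrm{Id}$.'' Rescaling by $c$ replaces $\lambda$ by $c^2\lambda$, so this normalization is possible over $k$ only if $\lambda$ is a square in $k^*$ --- and for a general projective involution over a non-closed field it need not be: for instance $M=\left(\begin{smallmatrix}0&2\\1&0\end{smallmatrix}\right)$ gives an involution of $\PP^1$ over $\mathbb{Q}$ with $M^2=2\,\mathrm{Id}$, and no lift of it is diagonalizable over $\mathbb{Q}$. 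Since the whole point of the lemma in this paper is rationality over a non-algebraically closed $k$ (it is invoked later over finite fields and fields with infinite Galois group), concluding that the minimal polynomial divides $T^2-1$ before knowing $\lambda$ is a square is a genuine gap: over $\bar k$ your argument is complete, over $k$ it is not.

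The missing idea, which is exactly what the paper's proof supplies, is to use a $k$-rational fixed point of $[-1]$: the neutral element $O=(a_0:\cdots:a_r)\in A(k)$ is fixed by the inversion, so the vector $(a_0,\ldots,a_r)$ is an eigenvector of the chosen lift $M$ with eigenvalue $\varepsilon_0\in k$, whence $\lambda=\varepsilon_0^2$ is forced to be a square. Then $M^2-\varepsilon_0^2\,\mathrm{Id}=(M-\varepsilon_0\,\mathrm{Id})(M+\varepsilon_0\,\mathrm{Id})=0$ and, since $\chr(k)\neq2$, $M$ is diagonalizable over $k$ with eigenvalues in $\{\pm\varepsilon_0\}$; dividing by $\varepsilon_0$ gives the action by $\pm1$ on coordinates. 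Inserting this observation about $O$ repairs your argument and makes it essentially identical to the paper's.
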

\begin{proof}
The first statement is a direct consequence of the symmetry of $\mathcal{L}$. 
Now fix a basis $(t_i)$ of $H^0(A,\mathcal{L})$ and let $M$ be 
the matrix of the coordinates of $[-1]^*t_i$ in the basis $(t_i)$. 
The morphism $[-1]$ is induced by an involution of $\PP^r$ so there exists 
$\varepsilon\in k$ such that $M^2 - \varepsilon Id = 0$. 

The neutral element $O=(a_0:\cdots:a_r)$ of $A \hookrightarrow \PP^r$  
is a fixed point for~$[-1]$. Hence, the vector $(a_0,\ldots,a_r)$ is 
an eigenvector of the matrix $M$ with eigenvalue $\varepsilon_0 \in k$. 
This implies that $\varepsilon = \varepsilon_0^2$ and if $\chr(k)\not=2$ 
then $M^2-\varepsilon Id$ factors as $(M-\varepsilon_0 Id)(M+\varepsilon_0 Id)$. 
This proves that $M$ can be diagonalized over $k$ with eigenvalues in 
$\{\pm\epsilon_0\}$ and the conclusion holds.
\end{proof}

Before considering non-algebraically closed fields, it is natural to consider 
what happens over $\bar{k}$. We start by giving an upper bound on the cardinality 
of a complete set of addition laws.  In what follows we define the difference map
$
\delta : A \times A \longrightarrow A
$
by $(x,y) \mapsto x-y$, and use the product partial order on bidegree given by 
$(k,l) \le (m,n)$ if and only if $k \le m$ and $l \le n$. 
For bidegree $(m,n) = (2,2)$ we denote the line bundle $\mathcal{M}_{m,n}$ 
of Proposition~\ref{prop:addition-law-sheaf} by $\mathcal{M}$.  
We begin by recalling a fundamental lemma of Lange and Ruppert~\cite[Prop.2.2, Prop.2.3]{LaRu}.
 
\begin{lemma}
\label{lem:image}
Let $\mathcal{L}$ be an ample line bundle on $A$.
\begin{enumerate}
\item if $\mathcal{L}$ is not symmetric then $H^0(A \times A,\mathcal{M})= 0$, and 
\item if $\mathcal{L}$ is symmetric then $\mathcal{M}$ is isomorphic to $\delta^* \mathcal{L}$ 
and is basepoint-free, and consequently $h^0(\mathcal{M})=h^0(\mathcal{L})$.
\end{enumerate} 
If $(m,n) > (2,2)$ then $h^0(\mathcal{M}_{m,n})=h^0(\mathcal{L})^2(mn-m-n)^g$.
\end{lemma}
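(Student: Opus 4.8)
The plan is to prove the three assertions of Lemma~\ref{lem:image} essentially as consequences of the theory of line bundles on abelian varieties, computing cohomology via the Künneth formula and the standard vanishing and multiplication-by-$n$ formulas. Throughout I write $\mathcal{M}_{m,n}=\mu^*\mathcal{L}^{-1}\otimes\pi_1^*\mathcal{L}^m\otimes\pi_2^*\mathcal{L}^n$ and I set $\mathcal{L}=\mathcal{L}(D)$ with $D$ effective and ample; the Néron--Severi class of $\mathcal{L}$ is what matters for most of the argument, so I will freely modify $\mathcal{L}$ by a translation.

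\textbf{Part (1): non-symmetric $\mathcal{L}$.}
First I would reduce to a Néron--Severi computation. Write $\phi_{\mathcal{L}}:A\to\hat A$ for the polarization. Using the theorem of the square, one has $\mu^*\mathcal{L}\otimes\pi_1^*\mathcal{L}^{-1}\otimes\pi_2^*\mathcal{L}^{-1}\cong (1\times 1)^*$ of the Poincaré-type bundle, and more precisely $\mu^*\mathcal{L}\cong\pi_1^*\mathcal{L}\otimes\pi_2^*\mathcal{L}\otimes(\mathrm{id}\times\phi_{\mathcal{L}})^*\mathcal{P}$ only when $\mathcal{L}$ is symmetric; in general the discrepancy is measured by $[-1]^*\mathcal{L}\otimes\mathcal{L}^{-1}$, which is a nontrivial element of $\hat A(\bar k)$ precisely when $\mathcal{L}$ is not symmetric. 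I would then observe that $\mathcal{M}$ restricted to a fibre $\{x\}\times A$ is a translate of $\mathcal{L}^2\otimes([-1]^*\mathcal{L}\otimes\mathcal{L}^{-1})$-twist, i.e.\ of the form $t_a^*\mathcal{L}^2\otimes(\text{degree-0 bundle depending on }x)$. The cleanest route is: $\mathcal{M}$ is algebraically equivalent to $\delta^*\mathcal{L}$ in all cases (this is a pure Néron--Severi statement, since $\delta=\mu\circ(1\times[-1])$ and $[-1]$ acts trivially on $\mathrm{NS}$), but it is actually isomorphic to $\delta^*\mathcal{L}$ only in the symmetric case. If $\mathcal{L}$ is not symmetric, then $\mathcal{M}$ differs from $\delta^*\mathcal{L}$ by a nontrivial bundle pulled back along one of the projections composed with $\phi_{\mathcal{L}}$, and restricting to a suitable abelian subvariety (a fibre of $\delta$, which is a copy of $A$ embedded diagonally-ish) gives a nontrivial degree-0 line bundle, whose $H^0$ vanishes; pushing forward along $\delta$ and using that $\delta$ is a smooth surjective morphism with connected fibres then forces $H^0(A\times A,\mathcal{M})=0$. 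I expect this to be the most delicate point to write carefully, because one must track the translation/degree-0 twist precisely.

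\textbf{Part (2): symmetric $\mathcal{L}$.}
Here I would argue that symmetry gives $\mu^*\mathcal{L}\cong m_1^*\mathcal{L}\otimes m_2^*$-type decomposition making $\mathcal{M}\cong\delta^*\mathcal{L}$ on the nose: indeed for symmetric $\mathcal{L}$ the map $\delta^*\mathcal{L}\otimes\mathcal{M}^{-1}$ is trivial on every fibre of both projections and on the diagonal, hence trivial by the seesaw theorem. Granting $\mathcal{M}\cong\delta^*\mathcal{L}$, basepoint-freeness is immediate because $\delta$ is smooth and surjective and $\mathcal{L}$ is very ample (in particular basepoint-free), so the pullback of a basepoint-free system is basepoint-free. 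For $h^0(\mathcal{M})=h^0(\mathcal{L})$: factor $\delta$ as the composite of the isomorphism $(x,y)\mapsto(x-y,y)$ of $A\times A$ with the first projection $\mathrm{pr}_1$; then $\mathcal{M}\cong\delta^*\mathcal{L}\cong(\text{iso})^*\mathrm{pr}_1^*\mathcal{L}$, and $H^0(A\times A,\mathrm{pr}_1^*\mathcal{L})=H^0(A,\mathcal{L})\otimes H^0(A,\mathcal{O}_A)=H^0(A,\mathcal{L})$ by the projection formula / Künneth, giving the claimed equality of dimensions.

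\textbf{Part (3): bidegree $(m,n)>(2,2)$.}
For the last formula I would write $\mathcal{M}_{m,n}=\mathcal{M}\otimes\pi_1^*\mathcal{L}^{m-2}\otimes\pi_2^*\mathcal{L}^{n-2}$, substitute $\mathcal{M}\cong\delta^*\mathcal{L}$, and change coordinates on $A\times A$ by the automorphism $\sigma:(u,v)\mapsto(u+v,v)$ so that $\delta$ becomes $\mathrm{pr}_1$ and $\pi_1,\pi_2$ become $\mathrm{pr}_1\circ\sigma$ and $\mathrm{pr}_2$; concretely $\sigma^*\mathcal{M}_{m,n}\cong\mathrm{pr}_1^*\mathcal{L}\otimes\sigma^*\pi_1^*\mathcal{L}^{m-2}\otimes\mathrm{pr}_2^*\mathcal{L}^{n-2}$. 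Now $\pi_1\circ\sigma=\mu$, so on the first factor I pick up a further $\mu^*\mathcal{L}^{m-2}$; iterating, the upshot is that after the coordinate change the bundle is of the form $\mathrm{pr}_1^*\mathcal{A}\otimes\mathrm{pr}_2^*\mathcal{B}\otimes(\text{Poincaré twists})$ with $\mathcal{A},\mathcal{B}$ ample, and one computes $h^0$ by Künneth together with the Mumford formula $h^0(A,\mathcal{L}^r)=r^g h^0(A,\mathcal{L})$ for ample symmetric $\mathcal{L}$ (from Riemann--Roch $h^0(\mathcal{L}^r)=r^g\sqrt{\deg\phi_{\mathcal{L}}}=r^g h^0(\mathcal{L})$, using the vanishing of higher cohomology for ample bundles). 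A slicker variant, which is probably what the authors intend: the class of $\mathcal{M}_{m,n}$ in $\mathrm{NS}(A\times A)$ is ample for $(m,n)>(2,2)$ and equals $mn$ times a principal class on each factor minus a correction, and Riemann--Roch on the $2g$-dimensional $A\times A$ gives $h^0(\mathcal{M}_{m,n})=\chi(\mathcal{M}_{m,n})=\frac{(c_1(\mathcal{M}_{m,n}))^{2g}}{(2g)!}$; a determinant/Pfaffian computation of this self-intersection in terms of $h^0(\mathcal{L})=\sqrt{\deg\phi_{\mathcal L}}$ yields $h^0(\mathcal{L})^2(mn-m-n)^g$. I would present the Künneth-plus-Mumford version as the main line since it avoids the intersection-number bookkeeping. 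The main obstacle, as noted, is Part~(1): correctly identifying the obstruction bundle and showing its $H^0$ vanishes; everything else reduces to seesaw, the projection formula, and the Mumford multiplication formula.
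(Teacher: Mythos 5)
Your route is genuinely different from the paper's, which proves almost nothing directly: the paper quotes Lange--Ruppert (Prop.~2.2) for part (1), for $\mathcal{M}\cong\delta^*\mathcal{L}$ and for basepoint-freeness, quotes the case $(2,3)$ of their Prop.~2.3 for the bidegree count, and obtains $h^0(\mathcal{M})=h^0(\mathcal{L})$ from the triviality of $\mathcal{M}|_{K(\mathcal{M})_0}$ together with Mumford's Theorem~1(ii) (index zero for ample bundles). You instead reprove the facts: seesaw for $\mathcal{M}\cong\delta^*\mathcal{L}$ (correct, and essentially Lange--Ruppert's computation); the factorization of $\delta$ through the automorphism $(x,y)\mapsto(x-y,y)$ followed by K\"unneth for $h^0(\delta^*\mathcal{L})=h^0(\mathcal{L})$, which is cleaner than the paper's appeal to Mumford and amounts to the same thing since the diagonal is exactly $K(\mathcal{M})_0$; and, for (1), restriction of $\mathcal{M}$ to the fibres of $\delta$. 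Part (1) is only a sketch, but the plan is sound and is what Lange--Ruppert do: parametrizing $\delta^{-1}(c)$ by $y\mapsto(y+c,y)$ and using $[2]^*\mathcal{N}\cong\mathcal{N}^3\otimes[-1]^*\mathcal{N}$ one finds $\mathcal{M}|_{\delta^{-1}(c)}\cong\mathcal{L}\otimes[-1]^*\mathcal{L}^{-1}$ for \emph{every} $c$, a degree-zero bundle independent of the fibre and nontrivial exactly when $\mathcal{L}$ is not symmetric; a nonzero global section would restrict to a nonzero section on some fibre, a contradiction. Writing out that fibre computation is all that is missing there. One caveat you share with the statement itself: basepoint-freeness of $|\mathcal{M}|=\delta^*|\mathcal{L}|$ requires $|\mathcal{L}|$ basepoint-free (very ampleness, as in the paper's setting, suffices); for a bare symmetric principal polarization it fails, so your explicit use of very ampleness is the honest hypothesis, not a defect.

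The one step that would fail as written is your ``main line'' for part (3). In N\'eron--Severi terms $\mathcal{M}_{m,n}$ corresponds to the integral form $\left(\begin{smallmatrix}m-1&-1\\-1&n-1\end{smallmatrix}\right)$ tensored with the class of $\mathcal{L}$, and the coordinate changes you iterate act on this form by $\mathrm{GL}_2(\ZZ)$-congruence; such a form is in general not integrally diagonalizable (already for $(m,n)=(3,3)$ the form $\left(\begin{smallmatrix}2&-1\\-1&2\end{smallmatrix}\right)$ takes only even values, hence is equivalent to no diagonal form of determinant $3$), so the iteration never reaches a genuine product $\mathrm{pr}_1^*\mathcal{A}\otimes\mathrm{pr}_2^*\mathcal{B}$ and K\"unneth does not apply directly. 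Your ``slicker variant'' is the proof and should be the main line: for $(m,n)>(2,2)$ the displayed form is positive definite, so $\mathcal{M}_{m,n}$ is nondegenerate with vanishing index (indeed ample), independently of whether $\mathcal{L}$ is symmetric, whence $h^0(\mathcal{M}_{m,n})=\chi(\mathcal{M}_{m,n})$; Riemann--Roch then gives $\chi(\mathcal{M}_{m,n})=\det\left(\begin{smallmatrix}m-1&-1\\-1&n-1\end{smallmatrix}\right)^{g}\chi(\mathcal{L})^2=(mn-m-n)^g\,h^0(\mathcal{L})^2$, which is in substance the argument of Lange--Ruppert that the paper simply invokes.
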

\begin{proof}
For $(m,n) > (2,2)$, the proof follows the case $(m,n)=(2,3)$ treated in \cite[Prop.2.3]{LaRu}.
For $(m,n) = (2,2)$, Lange and Ruppert prove in \cite[Proposition 2.2]{LaRu} that 
$\mathcal{M} \isom \delta^*\mathcal{L}$ and that $\mathcal{M}$ is basepoint-free. 
The equality $h^0(\mathcal{M})=h^0(\mathcal{L})$ is an easy consequence of the fact 
proved in \textit{loc.~cit.} that $\mathcal{M}|_{K(\mathcal{M})_0}$ is trivial 
and of the fact that, as $\mathcal{L}$ is ample, its index is zero. 
Indeed, according to \cite[Theorem 1(ii) p.95]{kempf}, one then has the isomorphism 
$H^0(A\times A,\mathcal{M}) \isom H^0(A,\mathcal{L})$.
\end{proof}

The isomorphism of $\mathcal{M}$ with $\delta^*\mathcal{L}$ allows us to consider 
line bundles on $A$ instead of $A\times A$.  The following well-known lemma 
shows that we can always find a symmetric embedding of $A/\bar{k}$.

\begin{lemma}
Let $(A,\lambda)$ be a principally polarized abelian variety over $\bar{k}$. 
There exists a symmetric line bundle which induces the polarization $\lambda$ on~$A$.
\end{lemma}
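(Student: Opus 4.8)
The plan is to exhibit such a symmetric line bundle by a standard averaging-type argument over the Néron--Severi group, using the fact that a principal polarization already pins down a point of $\mathrm{Pic}(A)/\mathrm{Pic}^0(A)$. First I would fix any line bundle $\mathcal{L}_0$ on $A$ inducing the polarization $\lambda$, i.e.\ with $\varphi_{\mathcal{L}_0} = \lambda \colon A \to \hat{A}$. The obstruction to symmetry lives in $\mathrm{Pic}^0(A)$: since $\varphi_{[-1]^*\mathcal{L}_0} = [-1]^{\vee}\circ\varphi_{\mathcal{L}_0}\circ[-1] = \varphi_{\mathcal{L}_0}$ (the two sign changes cancel because $\lambda$ is a homomorphism and $[-1]^{\vee}$ acts as $[-1]$ on $\hat A$), the bundle $[-1]^*\mathcal{L}_0\otimes\mathcal{L}_0^{-1}$ lies in $\mathrm{Pic}^0(A)$, say it equals $\mathcal{P}_a$ for a unique $a\in\hat A(\bar k)$.

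Next I would correct $\mathcal{L}_0$ by a translate so as to kill this discrepancy. Replacing $\mathcal{L}_0$ by $t_b^*\mathcal{L}_0$ for $b\in A(\bar k)$ changes the bundle only within its algebraic equivalence class and multiplies the defect by the factor coming from the theorem of the square: concretely $t_b^*\mathcal{L}_0\otimes\mathcal{L}_0^{-1}\cong\mathcal{P}_{\varphi_{\mathcal{L}_0}(b)}=\mathcal{P}_{\lambda(b)}$, and one checks that the defect of $t_b^*\mathcal{L}_0$ becomes $\mathcal{P}_{a - 2\lambda(b)}$ (the translate interacts with $[-1]$ through $[-1]\circ t_b = t_{-b}\circ[-1]$). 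Since $\lambda$ is an isomorphism and multiplication by $2$ is surjective on the divisible group $A(\bar k)$, I can solve $2\lambda(b)=a$, and then $\mathcal{L}:=t_b^*\mathcal{L}_0$ satisfies $[-1]^*\mathcal{L}\cong\mathcal{L}$, i.e.\ it is symmetric, while still inducing $\lambda$ because $t_b^*$ does not change the polarization.

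Alternatively, and perhaps more cleanly, I would use the explicit description of the $16$ (or $2^{2g}$) symmetric line bundles algebraically equivalent to a given one: over $\bar k$ there is a symmetric bundle in every class of $\mathrm{NS}(A)$, obtained for instance from $\mathcal{L}_0^2\cong t_c^*(\mathcal{L}_0\otimes[-1]^*\mathcal{L}_0)$ for a suitable $c$ (another application of the theorem of the square), after which a square-root argument in $\mathrm{Pic}^0(A)$ — available because $\mathrm{Pic}^0(A)(\bar k)$ is $2$-divisible — produces the desired symmetric square root inducing $\lambda$. Either route reduces the whole statement to the two structural facts that $[-1]$ fixes the polarization homomorphism and that $A(\bar k)$ (equivalently $\mathrm{Pic}^0(A)(\bar k)$) is $2$-divisible. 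The only mild subtlety, and the step I would be most careful about, is bookkeeping the signs when composing $[-1]^*$ with translations and when transporting everything through $\varphi_{\mathcal{L}_0}$; none of it is deep, which is why the lemma is flagged as well known, but getting the $2\lambda(b)=a$ relation exactly right (rather than $\lambda(b)=a$) is where an off-by-a-factor error would creep in.
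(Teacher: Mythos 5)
Your argument is correct and is essentially the paper's proof: both correct an arbitrary bundle $\mathcal{L}_0$ inducing $\lambda$ by a translation whose existence rests on the $2$-divisibility of $A(\bar{k})$, the paper finding $x$ with $t_x^*\mathcal{L}_0 \isom [-1]^*\mathcal{L}_0$ and translating by a half of $x$, while you do the same bookkeeping on the dual side by writing the defect $[-1]^*\mathcal{L}_0\otimes\mathcal{L}_0^{-1}=\mathcal{P}_a$ in $\mathrm{Pic}^0(A)$ and solving $2\lambda(b)=a$. The sign computations (in particular the relation $[-1]\circ t_b=t_{-b}\circ[-1]$ and the resulting defect $\mathcal{P}_{a-2\lambda(b)}$) are right, so no gap.
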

\begin{proof}
Suppose that $\mathcal{L}'$ is a line bundle attached to the polarization $\lambda$. 
We construct a symmetric line bundle $\mathcal{L}$ algebraically equivalent to $\mathcal{L}'$. 
Since $\mathcal{L}'$ is algebraically equivalent to $[-1]^*\mathcal{L}'$ 
(see \cite[p.93]{lang-ab}), there exists $x \in A(\bar{k})$ such that the 
translation $\tr{x}^*\mathcal{L}'$ is algebraically equivalent to $[-1]^*\mathcal{L}'$.
Let $y$ be an element of $A(\bar{k})$ such that $2y = x$, and set $\mathcal{L} 
= \tr{y}^*\mathcal{L}'$. Then $\mathcal{L}$ is algebraically equivalent to 
$\mathcal{L}'$ and 
$$
\mathcal{L}
  = \tr{y}^* \mathcal{L}' 
  = \tr{-y}^*\tr{x}^*\mathcal{L}' 
  \isom \tr{-y}^*[-1]^*\mathcal{L}' 
  = [-1]^*\mathcal{L}, 
$$
hence symmetric.
\end{proof}

Suppose that $\mathcal{L}$ is a symmetric line bundle as in the preceeding 
lemma.  By Lemma~\ref{lem:image} the embedding defined by $\mathcal{L}^3$ 
has a complete \system\ of biquadratic addition laws of cardinality equal 
to $h^0\left(A,\mathcal{L}^3\right) = 3^g$.  This gives an upper bound on 
the minimal size of a complete \system\ of addition laws.  We now determine a 
lower bound.

\begin{theorem}
Assume $A$ is embedded in $\PP^r$ by a symmetric line bundle. 
If $S$ is a complete \system\ of addition laws on $A$ then $\card{S} \ge g+1$.
\end{theorem}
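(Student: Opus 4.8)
The plan is to reduce the completeness hypothesis --- a condition on a family of divisors on $A\times A$ --- to a statement about divisors on the $g$-dimensional variety $A$ itself, by restricting everything to a well-chosen copy of $A$ inside $A\times A$, and then to finish with an elementary dimension count. Write $S=\{\frakp_1,\dots,\frakp_N\}$ with $\frakp_i$ of bidegree $(m_i,n_i)$, and note that $m_i,n_i\ge 1$ since an addition law must genuinely depend on both arguments (in fact $(m_i,n_i)\ge(2,2)$ by Lange--Ruppert). By Proposition~\ref{prop:addition-law-sheaf} and the construction following it, $\frakp_i$ corresponds to a nonzero section $w_i\in H^0(A\times A,\mathcal M_{m_i,n_i})$ and is defined exactly on $(A\times A)\setminus (w_i)_0$; hence $S$ is complete if and only if $\bigcap_{i=1}^N (w_i)_0=\emptyset$.

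For $a\in A(\bar k)$, let $\sigma_a\colon A\hookrightarrow A\times A$ be the closed immersion $x\mapsto(a+x,-x)$; its image is the fibre $\mu^{-1}(a)$, and these fibres cover $A\times A$. Since $\mu\circ\sigma_a$ is constant, $\pi_1\circ\sigma_a=\tr{a}$ and $\pi_2\circ\sigma_a=[-1]$, the formula for $\mathcal M_{m,n}$ in Proposition~\ref{prop:addition-law-sheaf} yields
$$
\sigma_a^*\mathcal M_{m_i,n_i}\;\cong\;\tr{a}^*\mathcal L^{m_i}\otimes[-1]^*\mathcal L^{n_i}\;\cong\;\tr{a}^*\mathcal L^{m_i}\otimes\mathcal L^{n_i},
$$
the last isomorphism using that $\mathcal L$ is symmetric; as $\mathcal L$ is ample and $m_i,n_i\ge 1$, the line bundle $\mathcal N_i:=\sigma_a^*\mathcal M_{m_i,n_i}$ is ample on $A$. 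Because $w_i\ne 0$ while the $\sigma_a(A)$ cover $A\times A$, the locus $B_i:=\{a\in A:w_i|_{\sigma_a(A)}\equiv 0\}$ is a proper closed subset of $A$: it is proper because $w_i\ne 0$, and closed because, after transporting $w_i$ along the isomorphism $(x,a)\mapsto(a+x,-x)$, the set of $a$ over which the transported section fails to vanish on the whole fibre is the image under the second projection $A\times A\to A$ --- which is flat, hence open --- of an open set. As $A$ is irreducible, we may fix $a\in A(\bar k)$ outside $B_1\cup\cdots\cup B_N$.

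For this $a$, each $s_i:=\sigma_a^*w_i$ is a nonzero section of the ample line bundle $\mathcal N_i$ on $A$, with zero divisor $D_i:=(s_i)_0$, and completeness gives $\bigcap_{i=1}^N D_i=\sigma_a^{-1}\big(\bigcap_{i=1}^N (w_i)_0\big)=\emptyset$. It remains to observe that $N$ effective divisors on an irreducible projective variety of dimension $g$, each with ample associated line bundle, cannot have empty common intersection when $N\le g$. Indeed, set $Z_0=A$ and let $Z_j$ be a maximal-dimensional irreducible component of $Z_{j-1}\cap D_j$: for $j\le N\le g$ one has $\dim Z_{j-1}\ge g-(j-1)\ge 1$, so $\mathcal N_j|_{Z_{j-1}}$ is ample, in particular nontrivial, whence $s_j|_{Z_{j-1}}$ either vanishes identically on $Z_{j-1}$ or has a zero there; either way $Z_{j-1}\cap D_j\ne\emptyset$ and $\dim Z_j\ge\dim Z_{j-1}-1\ge g-j$. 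Thus $\emptyset\ne Z_N\subseteq\bigcap_{i=1}^N D_i$, a contradiction; therefore $N\ge g+1$.

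The crux is the middle step. One needs a $g$-dimensional slice of $A\times A$ on which every $\mathcal M_{m_i,n_i}$ restricts amply --- which is exactly why the fibres of the addition map $\mu$, rather than, say, the fibres of the difference map $\delta$ (along which $\mathcal M_{2,2}\cong\delta^*\mathcal L$ is trivial), are the right choice, and it is here that symmetry of $\mathcal L$ is used --- and on which, simultaneously, none of the finitely many sections $w_i$ vanishes identically, which is what the properness of the loci $B_i$ secures. The remaining ingredients --- the dictionary between addition laws and sections, and the final dimension count --- are routine.
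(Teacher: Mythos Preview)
Your proof is correct and rests on the same key idea as the paper's: restrict the addition laws to a fibre of $\mu$ inside $A\times A$, where the problem becomes a dimension count on the $g$-dimensional variety $A$. The paper works directly on the antidiagonal $\nabla=\mu^{-1}(0)$, parametrised by your $\sigma_0=[\,1\,]\times[-1]$; using that $[-1]$ is linear (Lemma~\ref{lem:Inversion}), each addition law pulls back to a tuple $(a_0q,\dots,a_rq)$ for a single homogeneous polynomial $q$ of degree $m+n$ (since the output on $\nabla$ is the constant point $O=(a_0:\cdots:a_r)$), and completeness forces $V(\{q\})\cap A=\emptyset$, whence $|S|\ge g+1$ by the projective bound $\dim(V(I)\cap A)\ge g-|I|$. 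You instead pass to a \emph{generic} fibre $\mu^{-1}(a)$ so as to guarantee that no $w_i$ restricts to zero, and phrase the final intersection step intrinsically via ampleness of $\sigma_a^*\mathcal{M}_{m_i,n_i}$ rather than via coordinate polynomials. The genericity step is not strictly needed --- over $\nabla$ any $w_i$ vanishing identically simply contributes nothing and may be discarded without affecting the bound --- but your version has the merit of avoiding the explicit polynomial description and of treating addition laws of mixed bidegree uniformly.
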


\begin{proof}
Suppose that $S$ is a complete \system\ of addition laws of bidegree $(m,n)$ 
on $A$, and let $\nabla = \ker(\mu) \subset A \times A$.  
By Lemma~\ref{lem:Inversion}, the isomorphism 
$$
[\,1\,] \times [-1] : A \longrightarrow \nabla 
$$
is linear, and so $([\,1\,] \times [-1])^*S$ is a set of polynomial 
(rational) maps for $A \rightarrow \{O\} \subset A$.  
It follows that there exists a set $I$ of polynomials of degree $m+n$ 
such that
$$
([\,1\,] \times [-1])^*S = \left\{ 
  \big(a_0 q(X_0,\dots,X_r),\dots,a_r q(X_0,\dots,X_r)\big) 
  \,:\, q \in I \right\},
$$
where $O = (a_0:\dots:a_r)$.  
Since $S$ is complete, the subvariety $V(I) \cap A$ is empty.
On the other hand, its dimension is at least $\dim(A) - \card{I} \geq g - \card{S}$, 
hence the cardinality of $S$ must be at least $g+1$. 
\end{proof}

Although the interval $[g+1, 3^g]$ is quite large, the lower bound shows that 
there is no complete addition law on any abelian variety of any dimension. 
For $g=1$, these bounds show that the minimal size of a complete set of addition 
laws is either $2$ or $3$.  An explicit set of cardinality $3$ was already 
given by Lange and Ruppert~\cite[Sec.3]{LaRu} if $\chr(k) \not= 2,3$, and 
in~\cite{LaRu2} for any characteristic, and Bosma and Lenstra~\cite{BosmaLenstra} 
proved that a set of minimal cardinality $2$ is in fact sufficient.

\section{$k$-complete addition laws}
\label{sec:k-complete}

Let $\mathcal{L}$ be a very ample symmetric line bundle defined 
by an effective $k$-rational divisor $D$ on $A/k$.  

Since $\delta^*\mathcal{L} \isom \mathcal{M}
= \mu^*\mathcal{L}^{-1} \otimes \pi_1^* \mathcal{L}^2 \otimes \pi_2^* \mathcal{L}^2$ 
there exists $w$ in $H^0(A\times A,\mathcal{M})$ such that $(w)_0 = \delta^*(D)$.
As we have seen in Section~\ref{sec:addition}, $w$  
defines a biquadratic addition law on the complement of $(w)_0=\delta^* D$. 
Hence it is sufficient that $D$ has no $k$-rational point for 
the group law to be $k$-complete. Note that this is also a necessary 
condition since a $k$-rational point $x$ on $D$ gives the $k$-rational 
point $(x,0)$ on $\delta^*D$. 

\begin{theorem} 
\label{thm:AbVar}
Let $A/k$ be an abelian variety and $\iota_0:A\hookrightarrow\PP^{r_0}$ be an embedding for some $r_0>1$. Assume that $k$ has
infinite absolute Galois group and let $d>r_0$ be such that there exists a separable extension $K/k$ of degree $d$ over $k$.  Then there exists an embedding $\iota:A\hookrightarrow\PP^r$ and a $k$-complete biquadratic addition law on $\iota(A)$, with $r=(2d)^g(r_0+1)-1$. 
\end{theorem}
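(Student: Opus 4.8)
The plan is to reduce the theorem to the criterion established just before its statement: a symmetric very ample $k$-rational line bundle $\mathcal{L}=\mathcal{L}(D)$ yields a $k$-complete biquadratic addition law precisely when the effective $k$-rational divisor $D$ has no $k$-rational point. So the whole task is to manufacture, starting from the given embedding $\iota_0$, a new projectively normal symmetric embedding of $A$ whose divisor at infinity is $k$-rational but $k$-pointless, and to track the resulting projective dimension $r$.

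\emph{Step 1: pass to a $k$-rational symmetric divisor.} Starting from $\iota_0:A\hookrightarrow\PP^{r_0}$, attached to some ample $\mathcal{L}_0$, first replace $\mathcal{L}_0$ by a symmetric line bundle algebraically equivalent to it (as in the lemma above, choosing a $2$-division point of the relevant translation parameter; one must check this can be done $k$-rationally, or simply absorb the symmetrization into the later power, since $\mathcal{L}_0^{2}$ is automatically symmetric up to translation and $\mathcal{L}_0^{n}$ for $n\ge 3$ gives projectively normal embeddings). The point is to arrive at a symmetric very ample $k$-rational $\mathcal{M}_0=\mathcal{M}_0(D_0)$ whose associated embedding has a manageable target dimension comparable to $r_0$.

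\emph{Step 2: twist the divisor at infinity by a Galois orbit of points.} Here is the key construction. Let $K/k$ be the given separable extension of degree $d>r_0$, and let $P\in A(\bar k)$ be a point with residue field $K$ over $k$, i.e.\ whose Galois orbit has size exactly $d$; such $P$ exists because $A(\bar k)$ is Zariski dense and the locus of points of degree dividing some proper subextension is a proper closed subset, using that $[\mathrm{Gal}]$ is infinite. Form the $k$-rational $0$-cycle $\mathcal{P}=\sum_{\sigma}\sigma(P)$ of degree $d$, and consider a symmetric $k$-rational divisor $D$ in the linear system of $\mathcal{M}_0^{2}\otimes t_{\mathcal{P}}$-type translates — more concretely, take $D=\tau_{\mathcal{P}}^*D_0 + \tau_{-\mathcal{P}}^*D_0$ summed over the orbit so that $D$ is symmetric and $k$-rational, and arrange (by the theorem of the square) that $\mathcal{L}=\mathcal{L}(D)$ is a power of $\mathcal{M}_0$ twisted so as to stay symmetric. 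The decisive arithmetic input: a $k$-rational point $x\in A(k)$ cannot lie on $D$, because $x+\sigma(P)\in D_0$ for one $\sigma$ forces, applying Galois, $x+\tau(P)\in D_0$ for all $\tau$, and one chooses the orbit generic enough (again possible since $\mathrm{Gal}$ is infinite and $D_0$ has bounded degree $\deg(A\cap\text{hyperplane})$, a fixed number) that $d$ exceeds the number of translates of $D_0$ through any fixed $k$-point — this is exactly where $d>r_0$ is used, since $r_0$ bounds $h^0$ and hence the relevant intersection numbers.

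\emph{Step 3: identify the embedding and compute $r$.} The line bundle $\mathcal{L}$ so constructed is very ample and projectively normal once we have multiplied by a sufficiently high power; concretely $\mathcal{L}\cong \mathcal{M}_0^{2d}$ after the twisting is accounted for, so by the Künneth/product formula for sections of tensor powers on abelian varieties its space of sections has dimension $(2d)^{g}$ times that of $\mathcal{M}_0$, and composing with the Segre-type bookkeeping gives $h^0(A,\mathcal{L})=(2d)^g(r_0+1)$, whence $r=(2d)^g(r_0+1)-1$. Apply Lemma~\ref{lem:image} and Proposition~\ref{prop:addition-law-sheaf}: $\mathcal{M}=\mu^*\mathcal{L}^{-1}\otimes\pi_1^*\mathcal{L}^2\otimes\pi_2^*\mathcal{L}^2\cong\delta^*\mathcal{L}$ has a section $w$ with $(w)_0=\delta^*D$, and since $D$ has no $k$-point neither does $\delta^*D$ (a $k$-point $(x,y)$ of $\delta^*D$ gives the $k$-point $x-y$ of $D$), so the biquadratic addition law attached to $w$ is $k$-complete on $A\times A$.

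\emph{Main obstacle.} The routine parts are the dimension count and the reduction via Lemma~\ref{lem:image}; the genuinely delicate step is Step 2 — pinning down \emph{which} symmetric $k$-rational divisor in the right linear system to use so that (a) it is $k$-pointless, (b) it remains symmetric, and (c) the exact power of $\mathcal{M}_0$ and hence $h^0$ comes out to the stated $(2d)^g(r_0+1)$. Getting the bookkeeping to yield precisely $r=(2d)^g(r_0+1)-1$ rather than something larger will likely force the specific recipe "pull back the original divisor at infinity along the $2d$ translations by $\pm$ the Galois orbit", and verifying $k$-pointlessness will rest squarely on the hypothesis $d>r_0$ together with the infinitude of the absolute Galois group.
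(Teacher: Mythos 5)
Your overall skeleton (build a symmetric, effective, $k$-rational divisor $D$ with no $k$-rational point, invoke $\mathcal{M}\isom\delta^*\mathcal{L}$ to get a biquadratic law with exceptional locus $\delta^*D$, then compute $r$ from $h^0(\mathcal{L}_0^{2d})=(2d)^g(r_0+1)$) matches the paper, and your Step 3 is essentially the paper's closing argument. But the heart of the proof is your Step 2, and there the argument has a genuine gap. You take $D$ to be the sum of translates of $D_0$ by $\pm$ the Galois orbit of a degree-$d$ point $P\in A(\bar k)$, and you rule out a $k$-point $x$ on $D$ by noting that Galois stability forces the whole orbit of $P$ into the divisor $D_0-x$, then claiming a contradiction because ``$d$ exceeds the number of translates of $D_0$ through any fixed $k$-point, since $r_0$ bounds $h^0$.'' This counting has no force: for $g\ge 2$ the locus $D_0-x$ is a positive-dimensional divisor containing infinitely many closed points of every large degree, so no inequality $d>r_0$ (or any intersection-number bound) prevents the whole orbit from lying in it; and since $A(k)$ may be infinite (e.g.\ over a number field or $p$-adic field), choosing $P$ ``generic enough'' means avoiding the condition $\mathrm{orbit}(P)\subset D_0-x$ simultaneously for infinitely many $x$, which you do not justify. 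Even for $g=1$ the numerology is off: $\deg D_0\ge r_0+1$, so $d>r_0$ does not give the strict inequality your argument would need. A secondary gap: the existence of a closed point of $A$ of degree exactly $d$ is asserted via ``the locus of points of smaller degree is a proper closed subset,'' which is false as stated ($A(k)$ can be Zariski dense).

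The paper sidesteps all of this by putting the arithmetic into the ambient projective space rather than into points of $A$: writing $K=k(\alpha_0)$ with conjugates $\alpha_0,\dots,\alpha_{d-1}$, it takes the hyperplanes $H_i:\ X_0+\alpha_iX_1+\cdots+\alpha_i^{r_0}X_{r_0}=0$ in $\PP^{r_0}$. Since $[k(\alpha_i):k]=d>r_0$, the elements $1,\alpha_i,\dots,\alpha_i^{r_0}$ are $k$-linearly independent, so each $H_i$ already has \emph{no} $k$-rational point; this is the only place $d>r_0$ is used, and no genericity or counting is needed. Then $D_0=\iota_0^*(\sum_i H_i)$ is $k$-rational and $k$-pointless, $D=D_0+[-1]^*D_0$ is moreover symmetric, $\mathcal{L}(D)$ lies in the class of $\mathcal{L}_0^{2d}$ (very ample and projectively normal since $2d\ge 3$), and Riemann--Roch gives $r=(2d)^g(r_0+1)-1$. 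You flagged Step 2 yourself as the delicate point; as written it is not a proof, and the hyperplane-orbit idea is the missing ingredient.
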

\begin{proof}
Let $K = k(\alpha_0)/k$ be a separable extension, and denote by $\alpha_0,\ldots,\alpha_{d-1}$ 
its distinct Galois conjugates in the normal closure of $K/k$. 
For $i=0,\ldots,d-1$, let $H_{i}$ be the hyperplane in $\PP^{r_0}$
$$
H_{i} : X_{0}+\alpha_i X_{1}+\ldots+\alpha_i^{r_0} X_{r_0}=0.
$$

Since $d>r_0$, the sets $\{1,\alpha_i,\ldots,\alpha_i^{r_0}\}$ are linearly independent 
over $k$ for every $i$ and hence $H_{i}(k)$ is empty. 
Now $\sum H_{i}$ is a $k$-rational divisor, so let $D_0=\iota_0^*(\sum H_{i})$ and 
define the divisor $D=D_0+[-1]^*D_0$. Then $D$ is a symmetric, effective, $k$-rational divisor 
without $k$-rational points.  
Denote by $\mathcal{L}_0$ the line bundle associated to the embedding $\iota_0$. 
The line bundle $\mathcal{L}=\mathcal{L}(D)$ is isomorphic to $\mathcal{L}_0^{2d}$, 
so $\mathcal{L}$ is very ample and provides a projectively normal  embedding
$A\hookrightarrow\PP^{r}$ with a $k$-complete biquadratic addition law. 
By the Riemann-Roch theorem, the dimension $r$ is equal to $(2d)^g(r_0+1)-1$.
\end{proof}

\section{The genus one and two cases} 

In the previous section, a $k$-complete (biquadratic) addition law is proved 
to exist, for an embedding of the abelian variety in a projective space of 
high dimension. When $k=\FF_q$ is a finite field and the abelian variety $A/k$ 
has dimension $1$ or $2$, we will show that we can take the embedding to be 
the classical ones. In what follows, we let $\sigma$ denote the Frobenius 
automorphism of $\bar{k}/k$.

\subsection{Elliptic curves}
Let $A=E$ be an elliptic curve defined over $k=\FF_q$.
\begin{lemma}
\label{lem:AlignedOrbit}
If $q \ge 5$, there exists $P_0\in E(\bar{k})$ whose Galois orbit is given 
by three distinct points whose sum is $O$.
\end{lemma}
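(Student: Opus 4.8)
The goal is to find a point $P_0 \in E(\bar k)$ defined over a cubic extension of $k = \FF_q$ (so that its Galois orbit $\{P_0, \sigma P_0, \sigma^2 P_0\}$ consists of three distinct points) with the extra constraint that $P_0 + \sigma P_0 + \sigma^2 P_0 = O$. Equivalently, writing $T = P_0$, one wants $T \in E(\FF_{q^3})$ with $T \notin E(\FF_q)$ and $\mathrm{Tr}_{\FF_{q^3}/\FF_q}(T) := T + \sigma T + \sigma^2 T = O$. The plan is to analyze the kernel of this trace map and count points.

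First I would observe that the trace map $\mathrm{Tr}\colon E(\FF_{q^3}) \to E(\FF_q)$ is a homomorphism of abelian groups, and its kernel $G$ consists exactly of the points we want, together with possibly some points of $E(\FF_q)$ that we must exclude. A point $P \in E(\FF_q)$ lies in $G$ iff $3P = O$; hence $G \cap E(\FF_q)$ is the group $E(\FF_q)[3]$, which has order $1$ or $3$ (it is a subgroup of $(\ZZ/3\ZZ)^2$ that is cyclic since it is defined over $\FF_q$ and... more simply, one checks directly it has order dividing $3$, or uses that $E[3] \subset E(\FF_q)$ would force $q \equiv 1 \pmod 3$ — in any case $|E(\FF_q)[3]| \le 3$). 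So it suffices to show $|G| > 3$: then $G$ contains a point $P_0 \notin E(\FF_q)$, and since its Galois conjugates $\sigma P_0, \sigma^2 P_0$ also lie in the kernel and $P_0 \notin E(\FF_q)$ forces the three conjugates to be distinct (if two coincided, $P_0$ would be fixed by $\sigma$ or $\sigma^2$, hence lie in $E(\FF_q)$ since $[\FF_{q^3}:\FF_q]$ is prime).

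The main computation is to bound $|G|$ from below. Here I would use the structure of $E(\FF_{q^3})$ via the Frobenius. If $\alpha, \bar\alpha$ are the Frobenius eigenvalues with $\alpha\bar\alpha = q$ and $\alpha + \bar\alpha = a$, then $\#E(\FF_{q^3}) = (1-\alpha^3)(1-\bar\alpha^3)$ and the trace map can be identified, on the appropriate Tate/torsion modules, with multiplication by $1 + \alpha + \alpha^2 = (\alpha^3 - 1)/(\alpha - 1)$ (and its conjugate). Concretely, $\mathrm{Tr}$ on $E(\FF_{q^3})$ corresponds to the endomorphism $1 + \sigma + \sigma^2$ of $E$ restricted to $E(\FF_{q^3})$; its image lands in $E(\FF_q) = \ker(\sigma - 1)$, and $|G| = |\ker(1+\sigma+\sigma^2)|$ acting on $E(\FF_{q^3})$. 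Since $(\sigma - 1)(1 + \sigma + \sigma^2) = \sigma^3 - 1 = 0$ on $E(\FF_{q^3})$, we get $|G| \cdot |\mathrm{im}| = |E(\FF_{q^3})|$ where $\mathrm{im} \subseteq E(\FF_q)$. The degree of the endomorphism $1 + \sigma + \sigma^2$ is $(1+\alpha+\alpha^2)(1+\bar\alpha+\bar\alpha^2)$; one computes $|E(\FF_{q^3})| = |\ker(\sigma^3-1)| = \deg(\sigma^3 - 1) = |1-\alpha^3|^2$, and factoring $\sigma^3 - 1 = (\sigma-1)(\sigma^2+\sigma+1)$ gives $|E(\FF_{q^3})| = |E(\FF_q)| \cdot \deg(1+\sigma+\sigma^2)$, so in fact $|G| = \deg(1 + \sigma + \sigma^2) = (1+\alpha+\alpha^2)(1+\bar\alpha+\bar\alpha^2) = q^2 + aq + a^2 - a - q + 1$ — wait, let me just say: $|G| = N(1+\alpha+\alpha^2)$ where $N$ is the norm from $\ZZ[\alpha]$, and this equals $\frac{\#E(\FF_{q^3})}{\#E(\FF_q)}$.

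So the plan reduces to showing $\#E(\FF_{q^3}) / \#E(\FF_q) > 3$ for $q \ge 5$. By Hasse, $\#E(\FF_q) \le q + 1 + 2\sqrt q = (\sqrt q + 1)^2$ and $\#E(\FF_{q^3}) \ge q^3 + 1 - 2q^{3/2} = (q^{3/2}-1)^2$, hence the ratio is at least $\left(\frac{q^{3/2}-1}{q^{1/2}+1}\right)^2 = (q - \sqrt q + ... )$; more precisely $\frac{q^{3/2}-1}{q^{1/2}+1} = \frac{(q^{1/2}-1)(q + q^{1/2}+1)}{q^{1/2}+1}$, and for $q \ge 5$ this quantity squared is comfortably larger than $3$ (e.g. at $q = 5$ it is about $(( \sqrt5^3 - 1)/(\sqrt5+1))^2 \approx ((11.18-1)/3.236)^2 \approx 3.14^2 \approx 9.9 > 3$). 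The one subtlety I expect to need care with — and the main obstacle — is the clean identification $|G| = \#E(\FF_{q^3})/\#E(\FF_q)$: one must verify that $\mathrm{im}(1+\sigma+\sigma^2)$ is all of $E(\FF_q)$, not just a subgroup. This follows because $E(\FF_q)/\mathrm{im}$ injects into $H^1$ of a cyclic group acting on $E(\FF_{q^3})$, or more elementarily from the exact computation of degrees above: $\ker$ and $\mathrm{coker}$ of $1+\sigma+\sigma^2$ on $E(\FF_{q^3})$ have equal order (since source and target are finite of the same size $\#E(\FF_{q^3})$... no — target $E(\FF_q)$ is smaller), so instead I would run the degree/norm bookkeeping in $\ZZ[\alpha]$ directly: $\#(E(\FF_{q^3})) = N(\alpha^3-1) = N(\alpha-1)\,N(\alpha^2+\alpha+1) = \#E(\FF_q)\cdot N(1+\alpha+\alpha^2)$, and the kernel of the separable isogeny $1+\sigma+\sigma^2$ (separable for $q \ge 5$ since $\gcd(|G|, q) = 1$ as $q \nmid N(1+\alpha+\alpha^2)$ — check $N(1+\alpha+\alpha^2) \not\equiv 0 \pmod q$, i.e. $1 + a + a^2 \not\equiv 0 \pmod q$; the bad case $a^2 + a + 1 \equiv 0$ can be handled separately or absorbed, and in any event affects at most the characteristic-$p$ part) has order $N(1+\alpha+\alpha^2) > 3$. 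Finally I would spell out that distinctness of the three orbit points is automatic as noted above, completing the proof.
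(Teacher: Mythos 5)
Your overall strategy is the same as the paper's (consider the trace homomorphism $N\colon E(\FF_{q^3})\to E(\FF_q)$, $P\mapsto P+P^{\sigma}+P^{\sigma^2}$, and find a point of $\ker(N)$ outside $E(\FF_q)$), but there is a genuine error in the counting step. You claim that $\ker(N)\cap E(\FF_q)=E(\FF_q)[3]$ has order at most $3$, arguing that full rational $3$-torsion ``would force $q\equiv 1\pmod 3$''. That is not a contradiction: for $q\equiv 1\pmod 3$ there are elliptic curves over $\FF_q$ with $E[3]\subset E(\FF_q)$, so $E(\FF_q)[3]$ can have order $9$ (its $3$-part need not be cyclic; the constraint from the Weil pairing is only $3\mid q-1$). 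Consequently your sufficient condition ``$|\ker(N)|>3$'' is not sufficient; the correct threshold is $|\ker(N)|>9$, which is exactly what the paper checks. The Hasse-bound estimate $\bigl(q^{3/2}-1\bigr)^2/\bigl(q^{1/2}+1\bigr)^2$ does exceed $9$ for $q\ge 5$, but only barely at $q=5$ (about $9.9$), so this is not a harmless slack: with the threshold $3$ replaced by the correct $9$, the inequality must actually be verified, and the bound $q\ge5$ is essentially forced by it. Your argument as written would, for instance, not rule out that for some curve over $\FF_7$ with $E(\FF_7)\cong(\ZZ/3\ZZ)^2$ every small-kernel point is already rational, unless you prove the stronger inequality.

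A second, lesser point: the identification $|\ker(N)|=\#E(\FF_{q^3})/\#E(\FF_q)$, with its attendant worries about surjectivity of the trace and separability of $1+\sigma+\sigma^2$ (which you leave partly unresolved, ``handled separately or absorbed''), is unnecessary. Since $\im(N)\subseteq E(\FF_q)$, the first isomorphism theorem already gives the one-sided bound
$$
|\ker(N)|\;=\;\frac{\#E(\FF_{q^3})}{|\im(N)|}\;\ge\;\frac{\#E(\FF_{q^3})}{\#E(\FF_q)},
$$
which is all that is needed and is exactly what the paper uses; no norm computation in $\ZZ[\alpha]$, Lang's theorem, or separability discussion is required. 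Your observation that the three conjugates of a point $P_0\in\ker(N)\setminus E(\FF_q)$ are automatically distinct because $[\FF_{q^3}:\FF_q]$ is prime is correct and fine. To repair the proof: replace the bound on $E(\FF_q)[3]$ by $9$, use the displayed inequality, and check $\bigl(q^{3/2}-1\bigr)^2>9\bigl(q^{1/2}+1\bigr)^2$ for $q\ge5$.
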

\begin{proof}
Consider the group homomorphism $N:E(\FF_{q^3})\rightarrow E(\FF_q)$ given by
$$
P \longmapsto P+P^{\sigma}+P^{\sigma^2}.
$$ 
We are looking for a point $P_0 \in \ker(N) \setminus E(\FF_q)$,  
hence we want 
$$
\card{\ker(N)} > \card{\ker(N) \cap E(\FF_q)}.
$$

The intersection of $\ker(N)$ with $E(\FF_q)$ is the group of $\FF_q$-rational 
$3$-torsion points of $E$ so $\card{\ker(N) \cap E(\FF_q)} \leq 9$. On the 
other hand, for all $q \ge 5$, we have 
$$
\card{\ker(N)} \ge \frac{\card{E(\FF_{q^3})}}{\card{E(\FF_q)}} \ge \frac{q^3+1-2\sqrt{q^3}}{q+1+2\sqrt{q}} > 9,
$$ 
so such a point $P_0$ exists in $E(\FF_{q^3})$.
\end{proof}

\begin{remark}
For each of $q = 2, 3$ and $4$, there exists at least one elliptic curve over 
$\FF_q$ for which $\card{\ker(N)} = \card{\ker(N) \cap E(\FF_q)}$.
\end{remark}

\begin{theorem}
\label{thm:PointsHorizontalLine}
Let $k$ be the finite field $\FF_q$ with $q\ge5$ and $E/k$ be an elliptic curve. There exists a $k$-complete biquadratic addition law on the Weierstrass model of $E\subset\PP^2$. 
\end{theorem}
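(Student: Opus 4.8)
The plan is to produce a $k$-rational, symmetric, effective divisor $D$ on $E$ of degree $3$ which lies in the linear system $|3(O)|$ defining the Weierstrass embedding $E \hookrightarrow \PP^2$, and which has no $k$-rational point; then invoke the mechanism of Section~\ref{sec:k-complete} to conclude that the section $w \in H^0(E \times E, \mathcal{M})$ with $(w)_0 = \delta^* D$ yields a $k$-complete biquadratic addition law on the Weierstrass model. The key observation is that on an elliptic curve a degree-$3$ effective divisor $P_1 + P_2 + P_3$ is linearly equivalent to $3(O)$ precisely when $P_1 + P_2 + P_3 = O$ in the group law, so any such divisor is cut out by a line in $\PP^2$ and hence belongs to $|3(O)|$; thus it suffices to find such a divisor that is $\sigma$-stable as a divisor but has no individual $\FF_q$-rational point.

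First I would take $P_0 \in E(\FF_{q^3})$ as furnished by Lemma~\ref{lem:AlignedOrbit}: its Galois orbit consists of three distinct points $P_0, P_0^\sigma, P_0^{\sigma^2}$ with $P_0 + P_0^\sigma + P_0^{\sigma^2} = O$, and $P_0 \notin E(\FF_q)$. Set $D = (P_0) + (P_0^\sigma) + (P_0^{\sigma^2})$. By construction $D$ is effective, $\FF_q$-rational (Galois-stable), of degree $3$, has no $\FF_q$-rational point in its support, and $D \sim 3(O)$ by the alignment condition, so $\mathcal{L}(D) \cong \mathcal{L}(3(O))$ is the very ample line bundle giving the Weierstrass embedding in $\PP^2$, which is projectively normal. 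The one remaining requirement from Section~\ref{sec:k-complete} is that $D$ be symmetric, i.e.\ $D \sim [-1]^*D$; but on $E$ any two $\FF_q$-rational degree-$3$ divisors in $|3(O)|$ are linearly equivalent, and $[-1]^*D$ is again effective of degree $3$ with $(-P_0) + (-P_0^\sigma) + (-P_0^{\sigma^2}) = -(P_0 + P_0^\sigma + P_0^{\sigma^2}) = O$, hence $[-1]^*D \sim 3(O) \sim D$. So $D$ is a symmetric, effective, $\FF_q$-rational divisor without $\FF_q$-rational points in $|3(O)|$.

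Now apply the discussion opening Section~\ref{sec:k-complete}: since $\delta^*\mathcal{L} \cong \mathcal{M}$ there is $w \in H^0(E \times E, \mathcal{M})$ with $(w)_0 = \delta^* D$, which defines a biquadratic addition law on $E \times E \setminus \delta^* D$. A $k$-rational point $(x,y)$ lies in $\delta^* D$ only if $x - y$ is a $k$-rational point of $D$; as $D$ has none, the addition law is defined on all of $(E \times E)(\FF_q)$, i.e.\ it is $\FF_q$-complete, and it lives on the Weierstrass model in $\PP^2$. This proves the theorem.

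The only real content is Lemma~\ref{lem:AlignedOrbit}, which is already established, so the present argument is essentially a matter of packaging: the point to be careful about is the equivalence ``degree-$3$ effective divisor $\sim 3(O)$ iff its points sum to $O$,'' which is exactly the standard translation between the divisor-class group of $E$ and the group $E(\bar{k})$, together with the fact that on $\PP^2$ such a divisor is the intersection with a (necessarily $\FF_q$-rational, by Galois descent) line. I do not anticipate a genuine obstacle here; the substantive work — producing the Galois orbit of three aligned, non-rational points, which forces $q \ge 5$ — has been isolated into the preceding lemma.
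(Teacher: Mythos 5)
Your proposal is correct and follows essentially the same route as the paper: take $P_0$ from Lemma~\ref{lem:AlignedOrbit}, let $D$ be the sum of its Galois conjugates, observe $D\sim 3(O)\sim[-1]^*D$ (so $\mathcal{L}(D)$ is symmetric and gives the Weierstrass embedding, even though $D$ itself is not a symmetric divisor), and conclude $k$-completeness from the absence of $k$-rational points on $D$ via the Section~\ref{sec:k-complete} mechanism. The extra details you supply (the sum-to-$O$ criterion for $D\sim 3(O)$ and the descent of the exceptional locus to $\delta^*D$) are just explicit versions of what the paper leaves implicit.
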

\begin{proof}
Let $P_0$ be a point as in Lemma \ref{lem:AlignedOrbit} and $D$ be the divisor given by the sum of 
the Galois conjugates of $P_0$. It is a $k$-rational divisor without $k$-rational points. It is not 
a symmetric divisor but $\mathcal{L}=\mathcal{L}(D)$ is a symmetric line bundle as $D\sim3(O)\sim[-1]^*D$. 
Another consequence of the relation $D\sim3(O)$ is that the embedding associated to $\mathcal{L}(D)$ 
is projectively equivalent to the Weierstrass model of $E$.
\end{proof}

\begin{remark} 
\label{rem:HilbertianFields}
We use the fact that $k$ is a finite field only to prove the existence of the point 
$P_0$. It is easy to see that when $k$ is a number field, such a point always exists 
and so the conclusion of Theorem \ref{thm:PointsHorizontalLine} still holds. 
Indeed, if $E$ is defined by $y^2 + h(x) y = f(x)$, then, since $k$ is Hilbertian 
(see \cite[p.225]{LangFund}), there exists $y_0 \in k$ such that $y_0^2 + h(x) y_0 - f(x)$ 
is irreducible. We can take $P_0 = (x_0,y_0)$ where $x_0$ is any root of 
$y_0^2 + h(x) y_0 - f(x) = 0$ in $\bar{k}$.

In particular, for $\textrm{char}(k) \ne 2$ or $3$, by means of a change of variables 
we may assume $E$ is of the form $y^2 = x^3 + ax + b$.  Moreover, if $E$ has no non trivial 
$k$-rational $2$-torsion point, then the polynomial $f(x) = x^3 + ax + b$ is irreducible over $k$ 
and the sum $(X_1:Y_1:Z_1) + (X_2:Y_2:Z_2)$ is given by the addition law
$(X_3^{(2)},Y_3^{(2)},Z_3^{(2)})$ of Bosma and Lenstra~\cite{BosmaLenstra}: 
$$
\begin{array}{r@{}l}
\big(\,
(X_1 Y_2 & \,+\, Y_1 X_2)(Y_1 Y_2 - 6b Z_1 Z_2) - a\, (Y_1 Z_2 + Z_1 Y_2) (2 X_1 X_2 - a Z_1 Z_2) \\
  &\,-\, X_1 Z_2 (a X_1 Y_2 + 3b Y_1 Z_2) - Z_1 X_2 (a Y_1 X_2 + 3b Z_1 Y_2),\\ 
Y_1^2 Y_2^2 
  &\,+\, a\, X_1 X_2 (3 X_1 X_2 - 2 a Z_1 Z_2) - a^2\, (X_1 Z_2 + Z_1 X_2)^2 \\
  &\,+\; 3b\, (X_1 Z_2 + Z_1 X_2)(3 X_1 X_2 - a Z_1 Z_2) - (a^3 + 9b^2) Z_1^2 Z_2^2,\\
Y_1 Y_2 
  & (Y_1 Z_2 + Z_1 Y_2) + (3 X_1 X_2 + 2a Z_1 Z_2)(X_1 Y_2 + Y_1 X_2) \\
  & +\; (a X_1 + 3b Z_1) Y_1 Z_2^2 + Z_1^2 (a X_2 + 3b Z_2) Y_2
\,\big),
\end{array}
$$
\ignore{
FF<a,b> := FunctionField(ZZ,2);
PP<X,Y,Z> := ProjectiveSpace(FF,2);
EE := PP!!EllipticCurve([a,b]);
fE := DefiningPolynomial(EE);
PPxPP<X1,Y1,Z1,X2,Y2,Z2> := ProductProjectiveSpace(FF,[2,2]);
EExEE := Scheme(PPxPP,[ Evaluate(fE,XXi) : XXi in [[X1,Y1,Z1],[X2,Y2,Z2]] ]);
X_1, Y_1, Z_1, X_2, Y_2, Z_2 := Explode([X1,Y1,Z1,X2,Y2,Z2]);
BY := [
  (X_1 * Y_2 + Y_1 * X_2)* (Y_1 * Y_2 - 6 * b * Z_1 * Z_2) - a * (Y_1 * Z_2 + Z_1 * Y_2) * (2 * X_1 * X_2 - a * Z_1 * Z_2) 
  - X_1 * Z_2 * (a * X_1 * Y_2 + 3 * b * Y_1 * Z_2) - Z_1 * X_2 * (a * Y_1 * X_2 + 3 * b * Z_1 * Y_2),
Y_1^2 * Y_2^2 + a * X_1 * X_2 * (3 * X_1 * X_2 - 2 * a * Z_1 * Z_2) - a^2 * (X_1 * Z_2 + Z_1 * X_2)^2 
  + 3 * b * (X_1 * Z_2 + Z_1 * X_2) * (3 * X_1 * X_2 - a * Z_1 * Z_2) - (a^3 + 9 * b^2) * Z_1^2 * Z_2^2,
Y_1 * Y_2 * (Y_1 * Z_2 + Z_1 * Y_2) + (3 * X_1 * X_2 + 2 * a * Z_1 * Z_2) * (X_1 * Y_2 + Y_1 * X_2) 
  + (a * X_1 + 3 * b * Z_1) * Y_1 * Z_2^2 + Z_1^2 * (a * X_2 + 3 * b * Z_2) * Y_2 
];
BB := [
    // EXceptional divisor intersect E X {0} is Z1 = 0 (this is the standard addition law with Z1*Z2 divided out):
    [
    a*Z1^2*X2^2 - 2*Y1*Z1*X2*Y2 + X1*Z1*Y2^2 - Y1^2*X2*Z2 + 3*b*Z1^2*X2*Z2 + 2*X1*Y1*Y2*Z2 - a*X1^2*Z2^2 -
    3*b*X1*Z1*Z2^2,
    3*X1*Y1*X2^2 - 3*X1^2*X2*Y2 - a*Z1^2*X2*Y2 + Y1*Z1*Y2^2 + 2*a*Y1*Z1*X2*Z2 - Y1^2*Y2*Z2 - 2*a*X1*Z1*Y2*Z2 -
    3*b*Z1^2*Y2*Z2 + a*X1*Y1*Z2^2 + 3*b*Y1*Z1*Z2^2,
    -3*X1*Z1*X2^2 + Z1^2*Y2^2 + 3*X1^2*X2*Z2 - a*Z1^2*X2*Z2 - Y1^2*Z2^2 + a*X1*Z1*Z2^2
    ],
    // EXceptional divisor intersect E X {0} is X1 = 0:
    [
    Y1^2*X2^2 + a*X1*Z1*X2^2 + 3*b*Z1^2*X2^2 - X1^2*Y2^2 - a*X1^2*X2*Z2 - a^2*Z1^2*X2*Z2 - 3*b*X1^2*Z2^2 +
    a^2*X1*Z1*Z2^2,
    a*Y1*Z1*X2^2 + Y1^2*X2*Y2 - 2*a*X1*Z1*X2*Y2 - 3*b*Z1^2*X2*Y2 - X1*Y1*Y2^2 + 2*a*X1*Y1*X2*Z2 + 6*b*Y1*Z1*X2*Z2 -
    a*X1^2*Y2*Z2 - 6*b*X1*Z1*Y2*Z2 + a^2*Z1^2*Y2*Z2 + 3*b*X1*Y1*Z2^2 - a^2*Y1*Z1*Z2^2,
    -a*Z1^2*X2^2 - 2*Y1*Z1*X2*Y2 - X1*Z1*Y2^2 + Y1^2*X2*Z2 - 3*b*Z1^2*X2*Z2 + 2*X1*Y1*Y2*Z2 + a*X1^2*Z2^2 +
    3*b*X1*Z1*Z2^2
    ],
    // EXceptional divisor intersect E X {0} is Y1 = 0 (this is the complete addition law!):
    [ 
    a*Y1*Z1*X2^2 - Y1^2*X2*Y2 + 2*a*X1*Z1*X2*Y2 + 3*b*Z1^2*X2*Y2 - X1*Y1*Y2^2 + 2*a*X1*Y1*X2*Z2 + 6*b*Y1*Z1*X2*Z2 +
    a*X1^2*Y2*Z2 + 6*b*X1*Z1*Y2*Z2 - a^2*Z1^2*Y2*Z2 + 3*b*X1*Y1*Z2^2 - a^2*Y1*Z1*Z2^2,
    -3*a*X1^2*X2^2 - 9*b*X1*Z1*X2^2 + a^2*Z1^2*X2^2 - Y1^2*Y2^2 - 9*b*X1^2*X2*Z2 + 4*a^2*X1*Z1*X2*Z2 + 3*a*b*Z1^2*X2*Z2
    + a^2*X1^2*Z2^2 + 3*a*b*X1*Z1*Z2^2 + (a^3 + 9*b^2)*Z1^2*Z2^2,
    -3*X1*Y1*X2^2 - 3*X1^2*X2*Y2 - a*Z1^2*X2*Y2 - Y1*Z1*Y2^2 - 2*a*Y1*Z1*X2*Z2 - Y1^2*Y2*Z2 - 2*a*X1*Z1*Y2*Z2 -
    3*b*Z1^2*Y2*Z2 - a*X1*Y1*Z2^2 - 3*b*Y1*Z1*Z2^2
    ]
];
mu := map< EExEE->EE | BB cat [BY]>;
}%
specialized to $(a_1,a_2,a_3,a_4,a_6) = (0,0,0,a,b)$. Under the hypothesis on the 
$2$-torsion, the exceptional divisor $\delta^*\{Y=0\}$ is irreducible, hence the 
addition law is $k$-complete. 
\end{remark}

\subsection{Genus $2$ curves} 
Let $C$ be a genus $2$ curve over a finite field $k=\FF_q$, with hyperelliptic 
involution $P \mapsto \conj{P}$.  By \cite[Proposition 2.3.21, p.180]{TV}, there 
exists a (not necessarily effective) $k$-rational divisor $P_\infty$ of degree 
$1$, such that $2 P_{\infty}$ 
is equivalent to the canonical divisor $\kappa$ of $C$.  The divisor $\Theta$, defined 
as the image of $C$ in $\Jac(C)$ under the map $P \mapsto (P)-P_{\infty}$, is   
then a $k$-rational, ample, symmetric divisor which defines the canonical 
principal polarization on $\Jac(C)$. For any $z \in \Jac(C)(\bar{k})$, we denote 
by $\Theta_z$ its translation $(\tr{z}^*)^{-1} \Theta=\Theta+z$.\\

The following result can be found for instance in \cite[p.275]{mumCJ}.
\begin{proposition}
\label{prop:decalage}
Let $P$ and $Q$ be points in $C(\bar{k})$, and set $z = (P)-(Q) 
\in \Jac(C)(\bar{k})$. Then we have 
$$
\Theta \cap \Theta_z = \left\{(P)-P_{\infty},\,(\conj{Q})-P_{\infty}\right\}.
$$
\end{proposition}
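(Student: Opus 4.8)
The plan is to reduce the computation of $\Theta\cap\Theta_z$ to a question about linear equivalence of effective divisors of degree $2$ on $C$, and then to settle that question with the Riemann--Roch theorem on the genus $2$ curve $C$. By construction $\Theta=\{\,[(R)-P_\infty]:R\in C(\bar k)\,\}$ inside $\Jac(C)(\bar k)=\mathrm{Pic}^0(C)(\bar k)$, and since $z=[(P)-(Q)]$ the translate is $\Theta_z=\Theta+z=\{\,[(R)-P_\infty+(P)-(Q)]:R\in C(\bar k)\,\}$; here the map $R\mapsto[(R)-P_\infty]$ is injective because $g\ge1$. Hence a class $x$ lies in $\Theta\cap\Theta_z$ exactly when there are $R_1,R_2\in C(\bar k)$ with $[(R_1)-P_\infty]=[(R_2)-P_\infty+(P)-(Q)]$, i.e.
$$
(R_1)+(Q)\sim(R_2)+(P),
$$
and then $x=[(R_1)-P_\infty]$. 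One assumes $P\ne Q$: for $P=Q$ one has $z=0$ and $\Theta_z=\Theta$, so $P\ne Q$ is implicit in the statement.

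First I would record the elementary fact that on a genus $2$ curve two effective divisors $D,D'$ of degree $2$ with $D\sim D'$ are either equal or both linearly equivalent to the canonical divisor $\kappa$. Indeed, if $D\ne D'$ then $|D|$ contains two distinct divisors, so $h^0(\oo_C(D))\ge2$; Riemann--Roch gives $h^0(\oo_C(D))-h^0(\oo_C(\kappa-D))=\deg D-1=1$, hence $h^0(\oo_C(\kappa-D))\ge1$, and since $\deg(\kappa-D)=0$ this means $\kappa-D\sim0$, i.e.\ $D\sim\kappa$; likewise $D'\sim\kappa$.

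Applying this with $D=(R_1)+(Q)$ and $D'=(R_2)+(P)$ splits the analysis in two. If $D=D'$ as divisors, then the multisets $\{R_1,Q\}$ and $\{R_2,P\}$ coincide, and since $Q\ne P$ this forces $R_2=Q$ and $R_1=P$, giving $x=[(P)-P_\infty]$. If instead $D\sim D'\sim\kappa$, then $(R_1)+(Q)\sim\kappa$; since $|\kappa|$ is the hyperelliptic pencil, whose members are precisely the fibres $(R)+(\conj{R})$ of the degree $2$ map $C\to\PP^1$, this forces $R_1=\conj{Q}$, giving $x=[(\conj{Q})-P_\infty]$. Thus $\Theta\cap\Theta_z\subseteq\{(P)-P_\infty,\ (\conj{Q})-P_\infty\}$. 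The reverse inclusion is immediate: both points obviously lie on $\Theta$; moreover $(P)-P_\infty=((Q)-P_\infty)+((P)-(Q))$ exhibits $(P)-P_\infty$ as the $z$-translate of $(Q)-P_\infty\in\Theta$, and from $(\conj{Q})+(Q)\sim\kappa\sim(\conj{P})+(P)$ one gets $(\conj{Q})-P_\infty\sim((\conj{P})-P_\infty)+((P)-(Q))$, the $z$-translate of $(\conj{P})-P_\infty\in\Theta$.

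The one real step is the Riemann--Roch classification of degree $2$ effective divisors up to linear equivalence --- the statement that all nontrivial such equivalences come from the unique $g^1_2$ --- while everything else is bookkeeping with the definitions of $\Theta$ and $\Theta_z$. The only point requiring a little care is the degenerate case $P=\conj{Q}$, in which the two intersection points coincide; the stated set-theoretic equality is unaffected, whereas the scheme-theoretic intersection acquires a double point there, consistent with $(\Theta\cdot\Theta_z)=2$.
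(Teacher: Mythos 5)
Your proof is correct. Note that the paper itself gives no argument for this proposition---it simply cites Mumford's \emph{Curves and their Jacobians}---so your self-contained derivation is a welcome substitute, and it is in fact the standard one: reduce membership in $\Theta\cap\Theta_z$ to a linear equivalence $(R_1)+(Q)\sim(R_2)+(P)$ of effective degree-$2$ divisors, then use Riemann--Roch to see that such an equivalence is either an equality of divisors or forces both divisors into the canonical $g^1_2$, whose members are the hyperelliptic fibres $(R)+(\conj{R})$. All the steps check out, including the reverse inclusion and your remarks on the implicit hypothesis $P\neq Q$ (without which $\Theta_z=\Theta$ and the statement fails) and on the degenerate case $P=\conj{Q}$, where the two listed points coincide set-theoretically while the scheme-theoretic intersection has multiplicity two, consistent with $(\Theta\cdot\Theta_z)=2$.
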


As in the previous section, we will need the existence of a Galois 
orbit of points for the construction of a divisor on $\Jac(C)$.
\begin{lemma} 
\label{lem:points}
If $q \geq 7$, there exists a point $P_0 \in C(\bar{k})$ whose Galois orbit 
has cardinality four and $P_0^{\sigma^2} = \conj{P}_0$. 
\end{lemma}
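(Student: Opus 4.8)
The plan is to find a point $P_0 \in C(\FF_{q^4})$ whose exact field of definition is $\FF_{q^4}$ (so its Galois orbit has cardinality four) and which satisfies the extra symmetry condition $P_0^{\sigma^2} = \conj{P_0}$. The condition $P_0^{\sigma^2} = \conj{P_0}$ should be encoded as a point-counting problem: composing the hyperelliptic involution with $\sigma^2$ gives a map on $C(\bar k)$, and I want to count its fixed points. Concretely, I would look at the set $T = \{ P \in C(\FF_{q^4}) : P^{\sigma^2} = \conj{P}\}$. Applying $\sigma^2$ twice to the defining relation gives $P^{\sigma^4} = \conj{\conj P} = P$, so automatically $P \in C(\FF_{q^4})$, which makes $T$ a natural finite set to estimate. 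The forbidden points — those whose orbit is \emph{smaller} than four, i.e.\ points in $C(\FF_{q^2})$ — also satisfy $P^{\sigma^2} = P$, and we want those lying in $T$ to be comparatively few; a point in $C(\FF_{q^2}) \cap T$ satisfies $P = \conj P$, i.e.\ it is a Weierstrass point, of which there are exactly six.

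The key step is therefore the estimate $\card{T} > 6$ for $q \ge 7$, which should follow from the Weil bounds. I would realize $T$ as the set of $\FF_{q^2}$-rational points of a suitable twist: the relation $P^{\sigma^2} = \conj P$ says that $P$ is fixed by the twisted Frobenius $\conj{\phantom{P}} \circ \sigma^2$, which is the $q^2$-power Frobenius of the quadratic twist $C'$ of $C$ by the hyperelliptic involution. Thus $\card{T} = \card{C'(\FF_{q^2})}$, and by the Hasse--Weil bound for a genus $2$ curve, $\card{C'(\FF_{q^2})} \ge q^2 + 1 - 4q$. The inequality $q^2 + 1 - 4q > 6$ holds precisely for $q \ge 5$, and with the genus-$2$ Weil bound written more carefully (the eigenvalues of Frobenius on the twist have absolute value $q$, so the deviation from $q^2+1$ is at most $2g \cdot q = 4q$) one gets $\card{T} \ge q^2 + 1 - 4q$; combined with $\card{C'(\FF_{q^2})} \equiv \card{C(\FF_{q^2})} \pmod{\text{something}}$ one can also just check the small cases by hand. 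Since the only elements of $T$ not of exact degree four over $k$ are the six Weierstrass points, picking $q \ge 7$ guarantees $\card{T} > 6$ and hence the existence of the desired $P_0$.

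The main obstacle I anticipate is bookkeeping around the twist and the definition fields: one must verify that the twisted Frobenius $\conj{\phantom{P}} \circ \sigma^2$ really is the geometric Frobenius of a genuine smooth curve $C'/\FF_{q^2}$ of genus $2$ (so that Weil applies), and one must be careful that a point $P \in T$ with orbit of size strictly less than four is necessarily $\sigma^2$-fixed, hence a fixed point of $\conj{\phantom{P}}$, hence Weierstrass — ruling out the possibility of orbit size $1$ or $3$ separately (size $3$ cannot divide $4$, and size $2$ or $1$ means $P \in C(\FF_{q^2})$, handled as above). Once $P_0$ is in hand, the orbit $\{P_0, P_0^\sigma, P_0^{\sigma^2}, P_0^{\sigma^3}\}$ with $P_0^{\sigma^2} = \conj{P_0}$ and $P_0^{\sigma^3} = \conj{P_0^\sigma}$ is exactly the configuration needed in the next step to build, via Proposition~\ref{prop:decalage}, a $k$-rational symmetric divisor on $\Jac(C)$ without $k$-rational points; but that is beyond the present lemma.
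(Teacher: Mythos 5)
Your argument is correct, and it reaches the same inequality as the paper by a genuinely different (though equivalent) route. The paper does not introduce the quadratic twist: it works directly with the quotient map $\phi: C \to \PP^1$ by the hyperelliptic involution, observes that the desired $P_0$ is precisely a point of $C(\FF_{q^4})\setminus C(\FF_{q^2})$ lying over $\PP^1(\FF_{q^2})$, and notes that no such point exists only if every fibre over $\PP^1(\FF_{q^2})$ is rational, forcing $\card{C(\FF_{q^2})} = 2(q^2+1)-e_2$ with $e_2 \le 6$ ramification points; this contradicts the \emph{upper} Weil bound $\card{C(\FF_{q^2})} \le q^2+4q+1$ for $q\ge 7$. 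You instead count the set $T=\{P: P^{\sigma^2}=\conj{P}\}$ as $\card{C'(\FF_{q^2})}$ for the twist $C'$ and use the \emph{lower} Weil bound $q^2+1-4q$; since $\card{C(\FF_{q^2})}+\card{C'(\FF_{q^2})}=2(q^2+1)$ for a hyperelliptic curve and its twist, the two estimates are literally dual, both reducing to $q^2-4q-5>0$. The paper's version is slightly more elementary (no need to justify that the twisted Frobenius comes from a genuine smooth genus-$2$ curve over $\FF_{q^2}$, the bookkeeping you rightly flag), while yours packages the symmetry condition more conceptually and identifies the exceptional points intrinsically as Weierstrass points. Two harmless slips: the number of fixed points of the involution is \emph{at most} six (in characteristic $2$ it is strictly smaller — the paper carefully writes $e_2\le 6$), which is all you need; and $q^2+1-4q>6$ holds for $q>5$, not $q\ge 5$ (at $q=5$ one has equality, consistent with the paper's remark that $y^2=x^6+1$ over $\FF_5$ has no such point), which again does not affect the case $q\ge 7$ of the lemma.
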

\begin{proof}
Let $\phi: C \rightarrow \PP^1$ be the quotient by the hyperelliptic involution.
Note that $P_0$ is a point in $C(\FF_{q^4}) \setminus C(\FF_{q^2})$ such that 
$\phi(P_0)$ is in $\PP^1(\FF_{q^2})$.  Moreover, no such point exists if and only 
if $\phi(C(\FF_{q^2})) = \PP^1(\FF_{q^2})$, or equivalently if   
$$
\card{C(\FF_{q^2})} = 2 (q^2+1) - e_2,
$$
where $e_2 \le 6$ is the number of ramification points of $\phi$ in $C(\FF_{q^2})$.
For $q \ge 7$, this equality contradicts the Weil bound $\card{C(\FF_{q^2})} \leq 
q^2 + 4q + 1$, and such a point exists. 
\end{proof}

\begin{remark}
For each $q = 2, 3, 4$ and $5$, there exists at least one genus $2$ curve 
over $\FF_q$ with no such point $P_0$.  In particular, for $q = 5$, the 
bound is tight (for $e_2 = 6$):
$$
\card{C(\FF_{q^2})} = 2 (q^2+1) - 6 = q^2 + 4q + 1 = 46,
$$
and is satisfied for the curve $y^2 = x^6+1$ over $\FF_5$. 

\end{remark}

\begin{theorem}
\label{thm:Genus2}
Let $C$ be a genus 2 curve over $\FF_q$ with $q \geq 7$. 
There exists a $k$-complete biquadratic addition law for the 
classical embedding of $\Jac(C)$ in $\PP^{15}$ determined by 
$\mathcal{L}(4\Theta)$. 
\end{theorem}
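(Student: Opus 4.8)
The plan is to invoke the principle recorded at the beginning of Section~\ref{sec:k-complete}: a very ample symmetric line bundle which is of the form $\mathcal{L}(D)$ for an effective, $k$-rational divisor $D$ with $D(k)=\emptyset$ carries a $k$-complete biquadratic addition law, produced from the section $w\in H^0(\Jac(C)\times\Jac(C),\mathcal{M})$ with $(w)_0=\delta^*D$. Since $\Theta$ is ample, symmetric and $\FF_q$-rational, $\mathcal{L}(4\Theta)=\mathcal{L}(\Theta)^{4}$ is very ample, symmetric, projectively normal, and realises the classical embedding $\Jac(C)\hookrightarrow\PP^{15}$ (as $h^0(4\Theta)=16$). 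So it is enough to exhibit an effective, $\FF_q$-rational divisor $D\sim 4\Theta$ on $\Jac(C)$ with $D(\FF_q)=\emptyset$.

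I would build $D$ out of translates of $\Theta$. Let $P_0$ be the point provided by Lemma~\ref{lem:points}; write its Galois orbit as $\{P_0,P_1,P_2,P_3\}$ with $P_i=P_0^{\sigma^i}$, so that $P_2=\conj{P}_0$ and $P_3=\conj{P}_1$, and set $z=[(P_0)-(P_1)]\in\Jac(C)(\bar k)$. Because $(\conj R)\sim\kappa-(R)$ on $C$, we get $z^{\sigma^2}=[(P_2)-(P_3)]=[(\conj{P}_0)-(\conj{P}_1)]=[(P_1)-(P_0)]=-z$, so the set $\{z,z^\sigma,-z,-z^\sigma\}$ is stable under $\sigma$ and
$$
D=\Theta_z+\Theta_{z^\sigma}+\Theta_{-z}+\Theta_{-z^\sigma}
$$
is an effective, $\FF_q$-rational divisor. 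By the theorem of the square $\Theta_z+\Theta_{-z}\sim 2\Theta$ and $\Theta_{z^\sigma}+\Theta_{-z^\sigma}\sim 2\Theta$, hence $D\sim 4\Theta$ and $\mathcal{L}(D)\isom\mathcal{L}(4\Theta)$, so $D$ realises the embedding in $\PP^{15}$.

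The remaining point, and the crux of the matter, is $D(\FF_q)=\emptyset$. Frobenius permutes cyclically (or, when $2z=0$, transposes) the translates occurring in $D$; since an $\FF_q$-rational point of $D$ is fixed by $\sigma$ and lies on one of those translates, it lies on all of them, in particular on $\Theta_z\cap\Theta_{z^\sigma}$. Now $x\in\Theta_z\cap\Theta_{z^\sigma}$ if and only if $x-z\in\Theta\cap\Theta_{z^\sigma-z}$, and here the special property $P_0^{\sigma^2}=\conj{P}_0$ is exactly what is needed: one computes
$$
z^\sigma-z=[2(P_1)-(P_0)-(P_2)]=[2(P_1)-\kappa]=[(P_1)-(\conj{P}_1)]=[(P_1)-(P_3)],
$$
so by Mumford's Proposition~\ref{prop:decalage}, $\Theta\cap\Theta_{z^\sigma-z}=\{(P_1)-P_\infty,\ (\conj{P}_3)-P_\infty\}$, which is a single point since $\conj{P}_3=P_1$. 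Thus $\Theta_z\cap\Theta_{z^\sigma}$ is supported at the one point $z+((P_1)-P_\infty)=[(P_0)-P_\infty]$, whose $\sigma$-conjugate $[(P_1)-P_\infty]$ differs from it because $P_0\ne P_1$. Hence $\Theta_z\cap\Theta_{z^\sigma}$, and therefore $D$, has no $\FF_q$-rational point, and the theorem follows.

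The only serious ingredient is Lemma~\ref{lem:points}, which is where the bound $q\ge 7$ enters; everything else is the short linear-equivalence computation above, engineered so that $P_0^{\sigma^2}=\conj{P}_0$ simultaneously forces $z^{\sigma^2}=-z$ (so that $D$ descends to $\FF_q$) and $\conj{P}_3=P_1$ (so that the controlling intersection $\Theta\cap\Theta_{z^\sigma-z}$ degenerates to a single non-rational point). I expect the main difficulty to lie precisely in hitting on this choice of $z$ and recognising that Lemma~\ref{lem:points} has been tailored for it; once that is in hand, no genericity argument or case analysis is required. The same reasoning applies over any field admitting such a point $P_0$, for instance Hilbertian fields and number fields, as in Remark~\ref{rem:HilbertianFields}.
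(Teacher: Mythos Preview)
Your proof is correct and is, in fact, the paper's proof in a different guise: your translation set $\{z,z^\sigma,-z,-z^\sigma\}$ coincides with the paper's $\{\alpha_3,\alpha_0,\alpha_1,\alpha_2\}$ (for instance $z^\sigma=[(P_1)-(P_2)]=[(P_0)+(P_1)-\kappa]=\alpha_0$), so the divisor $D$ is literally the same. Your use of the theorem of the square to get $D\sim 4\Theta$ matches the paper's observation that $\sum\alpha_i=0$, and your verification that the single point of $\Theta_z\cap\Theta_{z^\sigma}$ is non-rational is a slight streamlining of the paper's argument, which instead shows the full fourfold intersection is empty by computing two adjacent pairwise intersections.
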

\begin{proof}
For the canonical divisor $\kappa$ and a point $P_0$ as in 
Lemma~\ref{lem:points}, we define 
$$
\begin{array}{ll}
\alpha_0 = (P_0)+(P_0^{\sigma}) - \kappa, &
\alpha_1 = (P_0^{\sigma})+(\conj{P}_0) - \kappa, \\
\alpha_2 = (\conj{P}_0)+(\conj{P}_0^{\sigma}) - \kappa & 
\alpha_3 = (\conj{P}_0^{\sigma})+(P_0) - \kappa. 
\end{array}
$$
Using Proposition~\ref{prop:decalage}, we find 
$$
\Theta_{\alpha_0}\cap \Theta_{\alpha_1} 
= (\tr{\alpha_0}^*)^{-1}\left(\Theta\cap \Theta_{(\conj{P}_0)-(P_0)}\right) 
= \left\{ (\conj{P}_0)-P_{\infty} + \alpha_0 \right\},
$$
$$
\Theta_{\alpha_0}\cap \Theta_{\alpha_3} 
= (\tr{\alpha_0}^*)^{-1}\Big(\Theta\cap \Theta_{(\conj{P}_0^{\sigma})-(P_0^{\sigma})}\Big) 
= \left\{ (\conj{P}_0^{\sigma})-P_{\infty} + \alpha_0 \right\}.
$$
By construction, the divisor $D = \sum \Theta_{\alpha_i}$ is ample, symmetric 
and $k$-rational. Moreover, since there exists a transitive action on the 
components $\Theta_{\alpha_i}$, any $k$-rational point of $D$ must be a 
point of the intersection
$$
\Theta_{\alpha_0} \cap \Theta_{\alpha_1} \cap
\Theta_{\alpha_2} \cap \Theta_{\alpha_3},
$$
which is empty.  
Finally, we have $\sum \alpha_i = 0$ by construction, so $D \sim 4 \Theta$ 
and $D$ determines a $k$-complete addition law for the classical 
embedding of $\Jac(C)$ in $\PP^{15}$ determined by $\mathcal{L}(4\Theta)$.
\end{proof}

\begin{remark} 
\label{rem:NumberFields}
This construction can be generalized to other fields. 
For instance, following the same lines as Remark~\ref{rem:HilbertianFields},
Lemma~\ref{lem:points} has an analogue over number fields $k$. 
However, a $k$-rational divisor $P_{\infty}$ of degree $1$ may 
no longer exist,  
but for the family of curves $C$ such as $y^2 = f(x)$ with 
$\deg f = 5$, we can take $P_{\infty}$ to be the divisor with support the point at infinity.  
In this case, the analogue of Theorem~\ref{thm:Genus2} holds over 
a number field. Arene and Cosset have developed an algorithm 
to construct such an addition law~\cite{AreneCosset}.
\end{remark}

\begin{remark}
The construction of Theorem \ref{thm:Genus2} uses differences of 
effective divisors of degree $g = 2$. In general such degree $g$
divisors are necessary, since if $C$ is a curve of genus $g$ 
and if we define $W_i=\im(\Sym^i C \rightarrow \Jac(C))$, then 
by \cite[p.~146]{farkaskra} the intersection 
$$
\bigcap\; \{ W_{g-1}-a \;:\; a\in W_r+b \}
$$ 
is nonempty for any $0\le r \le g-1$ and any $b \in \Jac(C)$.
\end{remark}

\begin{acknowledgements}The authors thank David Gruenewald 
for careful reading and comments on a prior version of this article.
\end{acknowledgements}

\end{document}